\definecolor{labelkey}{rgb}{1,0,0}
\numberwithin{equation}{section}
\newtheorem{thm}{Theorem}[subsection]
\newtheorem{propose}[thm]{Proposition}
\newtheorem{lemma}[thm]{Lemma}
\theoremstyle{definition}
\newtheorem{defn}[thm]{Definition}
\newtheorem{remark}[thm]{Remark}
\newtheorem{remarks}[thm]{Remarks}
\newtheorem{example}[thm]{Example}
\newtheorem{examples}[thm]{Examples}
\newcommand{\M}{\mathcal{M}_{1}}
\newcommand{\MQ}{\mathcal{M}_{1,\mathbb Q}}
\newcommand{\Ma}{\mathcal{M}_{1}^{\mathrm a}}
\newcommand{\Mal}{\mathcal{M}_{1}^{\mathrm a,l}}
\newcommand{\Lie}{\mathrm{Lie}}
\newcommand{\uLie}{\underline{\mathrm{Lie}} }
\newcommand{\bLie}{\mathbf{Lie}}
\newcommand{\spec}{\mathrm{Spec}}
\newcommand{\eff}{\mathrm{eff}}
\newcommand{\dr}{\mathrm{dR}} \newcommand{\ab}{\mathrm{ab}}
\newcommand{\cris}{\mathrm{cris}}
\newcommand{\og}{\mathrm{Og}}
\newcommand{\bog}{\mathrm{BOg}}
\newcommand{\V}{\mathbb{V}}
\newcommand{\p}{\mathfrak{p}}
\renewcommand{\d}{{\text{\LARGE $\cdot $}}}
\renewcommand{\hat}{\widehat}
\newcommand{\Spec}{\operatorname{Spec}} 
\newcommand{\Hom}{\operatorname{Hom}}      
\newcommand{\Ext}{\operatorname{Ext}}      
\newcommand{\DM}{\operatorname{DM}}          
\newcommand{\End}{\operatorname{End}}      
\newcommand{\C}{\mathbb{C}}     
\newcommand{\F}{\mathbb{F}}
\newcommand{\Q}{\mathbb{Q}}     
\newcommand{\Z}{\mathbb{Z}}     
\renewcommand{\L}{\mathbb{L}}
\newcommand{\N}{\mathbb{N}}
\newcommand{\G}{\mathbb{G}}     
\renewcommand{\ker}{\operatorname{Ker}}  
\newcommand{\gr}{\operatorname{gr}}        
\newcommand{\Pic}{\operatorname{Pic}}     
\newcommand{\LAlb}{\operatorname{LAlb}}     
\newcommand{\LA}[1]{\mbox{${\rm L}_{#1}{\rm Alb}$}}
\newcommand{\longby}[1]{\stackrel{#1}{\longrightarrow}}
\renewcommand{\tilde}{\widetilde}
\newcommand{\df}{\mbox{\,${:=}$}\,}
\newcommand{\ie}{{\it i.e.}, }
\newcommand{\cf}{{\it cf. }}
\newcommand{\eg}{{\it e.g. }}
\newcommand{\loccit}{{\it loc. cit. }}
\newcommand{\gm}{{\rm gm}}
\renewcommand{\bar}{\overline}
\renewcommand{\lim}{\varprojlim}
\newcommand{\colim}{2\!\!\text{ -}\!\!\varinjlim}
\newcommand{\boxtensor}{\def\boxtimesten{\Box\kern-7.59pt\raise1.2pt
\hbox{$\times$} }}                                  
\newcounter{elno}                   
\newcommand{\cO}{\mathcal{O}}
\newcommand{\cP}{\mathcal{P}}
\newcommand{\cV}{\mathcal{V}}
\renewcommand{\phi}{\varphi}
\renewcommand{\epsilon}{\varepsilon}
\begin{document}
\title{Ogus realization of $1$-motives}
\author{F. Andreatta, L. Barbieri-Viale}
\address{Dipartimento di Matematica ``F. Enriques'', Universit{\`a} degli Studi di Milano\\ Via C. Saldini, 50\\ I-20133 Milano\\ Italy}\email{Fabrizio.Andreatta@unimi.it} \email{Luca.Barbieri-Viale@unimi.it}
\author{A. Bertapelle}
\address{Dipartimento di Matematica Pura ed Applicata, Universit\`a   degli Studi di Padova\\ vis Trieste, 63\\Padova -- I-35121\\ Italy} \email{alessandra.bertapelle@unipd.it}
\keywords{Motives, Ogus conjecture, de Rham cohomology}
\subjclass [2000]{14F30, 14F40, 14L05, 11G10}

\begin{abstract}
After introducing the Ogus realization of $1$-motives we prove that it is a fully faithful functor.
More precisely, following a framework introduced by Ogus, considering an enriched structure on the de Rham realization of $1$-motives over a number field,  we show that it yields a full functor by making use of an algebraicity theorem of Bost.
\end{abstract}

\maketitle

\section*{Introduction}
The Ogus realization of motives over a number field is considered as an analogue of the Hodge realization over the complex numbers and the $\ell$-adic realization over fields which are finitely generated over the prime field. The fullness of these realizations, along with the semi-simplicity of the essential image of pure motives, is a longstanding conjecture which implies the Grothendieck standard conjectures (\eg see \cite[\S 7.1]{An}).

The named conjecture on fullness is actually a theorem if we restrict to the category of abelian varieties up to isogenies regarded as the semi-simple abelian $\Q$-linear category of {\it pure} $1$-motives (\eg see \cite[Prop. 4.3.4.1 \& Thm. 7.1.7.5]{An}). A natural task is then to extend this theorem to {\it mixed}  $1$-motives up to isogenies. For the Hodge realization this goes through Deligne's result on the algebraicity of the effective mixed polarizable Hodge structures of level $\leq 1$ (see \cite[\S 10.1.3]{De}).  For the $\ell$-adic realization, the fullness follows from the Tate conjectures for abelian varieties (proven by Faltings) and the fullness for $1$-motives is proven by Jannsen (see \cite[\S 4]{JA}).\\

The main task of this paper is to show that there is a suitable version of Ogus realization for $1$-motives such that the fullness can be achieved: this is Theorem \ref{thm.tog} below.  This result for abelian varieties relies on a theorem of Bost as explained by Andr\'e (see \cite[\S 7.4.2]{An}).  For pure $0$-motives is our Lemma~\ref{pro.tog1}. 
However, in the mixed case, our theorem doesn't follow directly from Bost's theorem nor Andr\'e's arguments (see  Example \ref{ex.fund}).\\

For a number field $K$, recall that the Ogus category $\mathbf{Og}(K)$ is the $\Q$-linear abelian category whose objects are finite dimensional $K$-vector spaces $V$ such that the $v$-adic completion $V_{v}$ is endowed, for almost every unramified place $v$ of $K$, of a bijective semilinear endomorphism $F_{v}$. Actually, we here introduce an enriched version of the Ogus category denoted by $\mathbf{FOg}(K)$ whose objects $\cV\in  \mathbf{Og}(K)$ are endowed with an increasing finite exhaustive weight filtration $W_{\d}\cV$ (see Section 1 for details). We provide a realization functor
\[T_{\og}\colon \MQ \to \mathbf{FOg}(K)\]
where $\MQ$ is the abelian $\Q$-linear category of $1$-motives up to isogenies (see Proposition \ref{OGreal}). With the notation adopted below, this $T_{\og}({\sf M}_{K})$ of a $1$-motive ${\sf M}_{K}$ (see Definition \ref{TOg}), is given by $V\df T_{\dr}({\sf M}_{K})$ the de Rham realization (see Definition \ref{Tdr}), as a $K$-vector space, so that $V_{v}\simeq T_{\dr} ({\sf M}_{\cO_{K_{v}}} )\otimes_{\cO_{K_{v}}} K_{v}$ and $F_{v}\df (\Phi_{v}\otimes {\rm id})^{-1}$ where the $\sigma_{v}^{-1}$-semilinear endomorphism $\Phi_{v}$ (Verschiebung) on $T_{\dr}({\sf M}_{\cO_{K_{v}}})$ is obtained via the canonical isomorphism $T_{\dr}({\sf M}_{\cO_{K_{v}}})\simeq T_\cris({\sf M}_{k_{v}} )$ given by the comparison with the crystalline realization of the special fiber ${\sf M}_{k_{v}}$ for every unramified place $v$ of good reduction for ${\sf M}_K$ (see \cite[\S 4, Cor. 4.2.1]{ABV}).  

In the proof of the fullness of $T_{\og}$ for arbitrary $1$-motives, with nontrivial discrete part, the main ingredient is the fact that the decomposition of $V_{v}$ as a sum of pure $F$-$K_{v}$-isocrystals is realized geometrically via the $p$-adic logarithm (see Section 2, in particular Lemma \ref{d.lbd}, and  the key Lemma \ref{section}).

Note that the essential image of $T_{\og}$ is contained in the category $\mathbf{FOg}(K)_{(1)}$ of effective objects of level $\leq 1$ (see Definition \ref{fog1}). After Theorem~\ref{thm.tog} it is clear that the Ogus realization of pure $1$-motives in $\MQ$ is semi-simple.
However, the characterization of the essential image is an open question (unless we are in the case of Artin motives, \cf \cite{KW}).\\

Further directions of investigation are related to Voevodsky motives. Recall that the derived category of $1$-motives $D^b(\MQ)$ can be regarded as a reflective triangulated full subcategory of effective Voevodsky motives $\DM_{\gm}^{\eff}$ (see \cite[Thm. 6.2.1 \& Cor. 6.2.2]{BVK}). In fact, there is a functor $\LAlb^\Q\colon \DM_\gm^\eff\to D^b(\MQ)$ which is a left adjoint to the inclusion. Thus, associated to any algebraic $K$-scheme $X$, we get a complex of $1$-motives $\LAlb^\Q (X)\in D^b(\MQ)$. 
Whence, by taking $i$-th homology, we get $\LA{i}^\Q(X)_{K}\in \MQ$ which, most likely, is the geometric avatar of level $\leq 1$ Ogus $i$-th homology of $X$, \ie the Ogus realization $T_{\og}(\LA{i}^\Q(X))\in\mathbf{FOg}(K)_{(1)}$ is the largest quotient of level $\leq 1$ of the $i$-th Ogus homology of $X$, according with the framework of Deligne's conjecture (see \cite[\S 14]{BVK} and compare with \cite[Conj. C]{ABV}). 
This is actually the case for the underlying $K$-vector spaces by the corresponding result for the mixed realization but Ogus realization (see \cite[Thm. 16.3.1]{BVK}).

\subsubsection*{Notation}
We here denote by $K$ a number field. For any finite place $v$ of $K$, \ie any prime ideal of  the ring of integers $\cO_{K}$ of $K$, we let $K_{v}$ be the completion of $K$ with respect to the valuation $v$ and $\cO_{K_{v}}$ its ring of integers.  Let $\p_{v}$ be the unique maximal ideal of $\cO_{K_{v}}$, $k_{v}$ the residue field of $\cO_{K_{v}}$, $p_{v}$ its characteristic and $n_{v}\df [k_{v}:\F_{p_{v}}]$. If  $v$ is unramified, we let $\sigma_{v}$ be the canonical Frobenius map on $K_{v}$ and $\sigma_{v}$ will also denote the Frobenius maps on $\cO_{K_{v}}$ and on $k_{v}$.

\section{Ogus categories}

\subsection{The plain Ogus category $\mathbf{Og}(K)$} As in \cite[\S 7.1.5]{An} let $\mathbf{Og}(K)$ be the $\Q$-linear abelian category whose objects are finite dimensional $K$-vector spaces $V$ such that the $v$-adic completion $V_{v}=V\otimes_{K} K_{v}$ is equipped for almost every unramified place $v$ of $K$, of a bijective semilinear endomorphism $F_{v}$, \ie  for any $\alpha\in K_{v},x,y\in V_{v}$ we have $F_{v}(\alpha x+y)=\sigma_{v}(\alpha)F_{v}(x)+F_{v}(y)$.
Morphisms in $\mathbf{Og}(K)$ are $K$-linear maps compatible with the $F_{v}$'s for almost all $v$.

To give a more precise definition of the above category we have to present it as a $2$-colimit  category.
Let $\cP$ denote the set of unramified places of $K$. For any cofinite subset  $\cP^{\prime}$ of $\cP$, let $\mathcal C_{\cP^{\prime}}$ denote the $\Q$-linear category whose objects are of the type $\left(V, (V_{v}, F_{v})_{v\in \cP^{\prime}}, (g_{v})_{v\in \cP^{\prime}}\right)$ where $V$ is a finite dimensional
$K$-vector space, $V_{v}$ are finite dimensional $K_{v}$-vector spaces, $F_{v}$ is a bijective $\sigma_{v}$-semilinear endomorphism on $V_{v}$, and  $g_{v}\colon V\otimes_{K} K_{v}\to V_{v}$ is an isomorphism of $K_{v}$-vector spaces. Morphisms are morphisms of vector spaces (on $K$ and on $K_{v}$) which respect the given structures ($F_{v}$ and $g_{v}$). Clearly,
for any cofinite subset $\cP^{\prime\prime}$ of $\cP^{\prime}$,  we have a canonical restriction functor $i_{\cP^{\prime},\cP^{\prime\prime}}\colon \mathcal C_{\cP^{\prime}}\to \mathcal C_{\cP^{\prime\prime}}$
which simply forgets the data for $v\in \cP^{\prime}\smallsetminus \cP^{\prime\prime}$.
\begin{defn}\label{OgDef}
Let
\[\mathbf{Og}(K):= \colim_{\cP^{\prime}\subseteq \cP}\ \mathcal{C}_{\cP^{\prime}}
\]
be the 2-colimit category. If $\cV_{1}\in \mathcal C_{\cP_{1}}$ and $\cV_2\in \mathcal C_{\cP_{2}}$ then 
\[   \mathrm{Hom}_{\mathbf{Og}(K)}(\cV_{1},\cV_{2})=\varinjlim_{\cP_3\subset \cP_{1}\cap\cP_{2}} \mathrm{Hom}_{\mathcal C_{\cP_3}}(i_{\cP_{1},\cP_3} (\cV_{1}),i_{\cP_{2},\cP_3} (\cV_{2})). \]

\begin{remark}\label{rem.pn}
Given a positive integer $n$ we denote by $\cP_{n}\subset \cP$ the subset of  places  $v$ such that $n$ is invertible in $\cO_{K_{v}}$. Since any cofinite subset $\cP'$ in $\cP$ contains $\cP_{n}$, for any $n$ divisible by $p_v$ with $v\in \cP\smallsetminus\cP^{\prime}$, we may equivalently get  $\mathbf{Og}(K)$ as  $2\!\text{ -}\!\varinjlim_{n} \mathcal{C}_{\cP_{n} }$.
\end{remark}

For a given  $\cV\df \left(V, (V_{v}, F_{v})_{v\in \cP^{\prime}}, (g_{v})_{v\in \cP^{\prime}}\right)$ in $ \mathcal C_{\cP^{\prime}}$ and an integer $n\in \Z$ we define the twist $\cV (n) \df \left(V, (V_{v}, p^{-n}_{v}F_{v})_{v\in \cP^{\prime}}, (g_{v})_{v\in \cP^{\prime}}\right)$.
\end{defn}

\subsection{$F$-$K_{v}$-isocrystals}

Recall that a {\it pure} $F$-$K_{v}$-isocrystal  $(H,\psi)$ of integral weight $i$ relative to $k_{v}$ in the sense of \cite[II, \S 2.0]{Ch} is a $K_{v}$-vector space  $H$  together with a $K_{v}$-linear endomorphism $\psi$ such that the eigenvalues of $\psi$ are  Weil numbers of weight $i$ relative to $k_{v}$.  Namely, they are algebraic numbers such that they and  all their conjugates have archimedean absolute value equal to $p_{v}^{in_{v}/2}$. In particular, $\psi$ is an isomorphism. The trivial space $H=0$ is assumed to be a pure $F$-$K_{v}$-isocrystal of any weight.
We say that $(H,\psi)$ is a {\it mixed} $F$-$K_{v}$-isocrystal with integral weights relative to $k_{v}$ if it admits a finite increasing  $K_{v}$-filtration $$0=W_mH\subseteq
\cdots \subseteq W_iH\subseteq W_{i+1}H\subseteq \cdots \subseteq W_nH=H$$  respected by $\psi$ such that $\gr_{i}^WH_{\d}\df W_{i}H/W_{i-1}H$ is pure of weight $i$ for any $ m< i\leq n$.
A morphism  $(H,\psi)\to (H^{\prime},\psi^{\prime}) $ of $F$-$K_{v}$-isocrystals is a homomorphism of $K_{v}$-vector spaces $f\colon H\to H^{\prime}$ such that $f\circ\psi^{\prime}=\psi\circ f$.

\begin{lemma}\label{wn} Let $H$ be  a $K_{v}$-vector space and $\psi\colon H\to H$ an  endomorphism. The following are equivalent:
\begin{itemize}
\item[(i)] $(H,\psi)$ is a mixed $F$-$K_{v}$-isocrystal.
\item[(i')] $(H,\psi)$ admits a {\it unique} finite increasing  $K_{v}$-filtration $0=W_{m}H\subseteq   \cdots \subseteq W_{n}H=H$  respected by $\psi$ such that $\gr_{i}^W H\df W_{i}H/W_{i-1}H$ is pure of weight $i$ for each $ m< i\leq n$.
\item[(ii)] All eigenvalues of $\psi$ are Weil numbers of integral weight relative to $k_{v}$.  
\item[(iii)] $H$ has a unique decomposition $\oplus_{i=m+1}^{n} H_i$ by $\psi$-stable vector subspaces so that  $(H_i, \psi_{|H_i})$ is a pure $F$-$K_{v}$-isocrystal  of  weight $i$.
\end{itemize}
\end{lemma}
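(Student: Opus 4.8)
The plan is to prove the cycle of implications (i) $\Rightarrow$ (ii) $\Rightarrow$ (iii) $\Rightarrow$ (i$'$) $\Rightarrow$ (i), together with the uniqueness assertions in (iii) and (i$'$); everything reduces to elementary linear algebra over $\bar K_v$ once one notes that the weight of a Weil number is invariant under $K_v$-conjugation. For (i) $\Rightarrow$ (ii): picking a $K_v$-basis of $H$ adapted to the weight filtration makes the matrix of $\psi$ block upper triangular with diagonal blocks the $\psi$ on the $\gr_i^W H$, so the characteristic polynomial of $\psi$ is the product of those of the $\gr_i^W H$; since each $\gr_i^W H$ is pure of weight $i$, every eigenvalue of $\psi$ is a Weil number of integral weight. (The same computation shows that the characteristic polynomial is unchanged when $H$ is replaced by the associated graded $\bigoplus_i\gr_i^W H$, a remark reused below.)

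The key step is (ii) $\Rightarrow$ (iii), existence. Let $L/K_v$ be a finite Galois extension splitting the characteristic polynomial $P$ of $\psi$, and over $L$ write the generalized eigenspace decomposition $H\otimes_{K_v}L=\bigoplus_{\alpha}(H\otimes L)^{(\alpha)}$. For each integer $i$ set $\tilde H_i\df\bigoplus_{\alpha\text{ of weight }i}(H\otimes L)^{(\alpha)}$, a $\psi$-stable subspace defined over $L$. If $\tau\in\mathrm{Gal}(L/K_v)$ and $\alpha$ is an eigenvalue of weight $i$, then $\tau(\alpha)$ is again a root of $P$ and, since $\tau$ fixes $\Q$, it is a $\Q$-conjugate of $\alpha$, hence a Weil number of the same weight $i$. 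Thus each $\tilde H_i$ is $\mathrm{Gal}(L/K_v)$-stable, and by Galois descent it comes from a $\psi$-stable $K_v$-subspace $H_i\subseteq H$. Then $H=\bigoplus_i H_i$, and the eigenvalues of $\psi_{|H_i}$ are exactly the weight-$i$ eigenvalues of $\psi$, so $(H_i,\psi_{|H_i})$ is pure of weight $i$.

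For the remaining implications and both uniqueness statements I would use one observation: a subquotient of a pure $F$-$K_v$-isocrystal of weight $i$ is pure of weight $i$ (its eigenvalues form a subset), while a nonzero $K_v$-vector space cannot be pure of two distinct weights since $p_v^{in_v/2}\neq p_v^{jn_v/2}$ for $i\neq j$; hence there are no nonzero morphisms between pure isocrystals of different weights. Applied to two decompositions as in (iii), the composites $H_i\hookrightarrow H\twoheadrightarrow H'_j$ vanish for $i\neq j$, forcing $H_i\subseteq H'_i$ and, symmetrically, $H_i=H'_i$, which gives uniqueness in (iii). From the unique decomposition one gets the filtration $W_iH\df\bigoplus_{j\le i}H_j$ with $\gr_i^W H\cong H_i$ pure of weight $i$, proving (i$'$), hence (i), the implication (i$'$) $\Rightarrow$ (i) being trivial. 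Finally, for any filtration $W_\bullet$ as in (i$'$), the already-proved case (i) $\Rightarrow$ (ii) shows the eigenvalues of $\psi_{|W_iH}$ all have weight $\le i$, so $W_iH\to H_j$ is zero for $j>i$ and $W_iH\subseteq\bigoplus_{j\le i}H_j$; comparing the weight-$i$ parts of the characteristic polynomial, computed both from the decomposition of $H$ and from $\bigoplus_j\gr_j^W H$, gives $\dim\gr_i^W H=\dim H_i$ for all $i$, so these inclusions are equalities and the filtration is unique.

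The genuinely delicate point is the conjugation argument inside (ii) $\Rightarrow$ (iii): one must know that the Galois action on the eigenvalues of $\psi$, which a priori only permutes $K_v$-conjugates, respects the archimedean weight — and this is precisely the fact that a $K_v$-conjugate of an algebraic number is one of its $\Q$-conjugates. Everything else is routine bookkeeping with characteristic polynomials and generalized eigenspaces.
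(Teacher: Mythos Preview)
Your proof is correct and follows essentially the same route as the paper: block-triangular characteristic polynomial for (i)$\Rightarrow$(ii), generalized eigenspace decomposition over a Galois splitting field with descent via weight-invariance under conjugation for (ii)$\Rightarrow$(iii), and the canonical filtration $W_iH=\bigoplus_{j\le i}H_j$ for (iii)$\Rightarrow$(i$'$). The only cosmetic differences are in the uniqueness arguments---you invoke the principle ``no nonzero maps between pure objects of different weights'' and a dimension count, whereas the paper reduces to the split case and argues directly with generalized eigenspaces and bidirectional inclusions---but these are equivalent repackagings of the same linear algebra.
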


\begin{proof}
$(i)\Rightarrow (ii)$. Let $W_{\d}H$ be the filtration of $H$ and let $\bar\psi_{i}$ denote the endomorphism of $\gr_{i}^WH$ induced by $\psi$. The eigenvalues of $\bar \psi_{i}$ are Weil numbers of integral weight by hypothesis. Since the characteristic polynomial of $\psi$ is the product of the characteristic polynomials of $\psi_{|W_{n-1}H}$ and  $\bar\psi_{n}$, one   proves recursively that all eigenvalues of $\psi$ are Weil numbers (of integral weight). \\ 
$(ii)\Rightarrow (iii)$. If all eigenvalues of $\psi$ are in $K_{v}$, then  $H$ is the direct sum of its  generalized eigenspaces. Let  $H_i$ be the direct sum of generalized eigenspaces associated to eigenvalues of weight $i$. Then $(H_i,\psi_{|H_i})$ is  a pure $F$-$K_{v}$-isocrystal of weight $i$ and  $H=\oplus_{i} H_i$.
In the general case, let $L/K_{v}$ be a finite Galois extension containing all the eigenvalues of $\psi$ and let $k_{L}$ be its residue field.  Observe that $(H\otimes_{K_{v}} L,\psi\otimes {\rm id})$ is not an $F$-$L$-isocrystal in general. Indeed if an eigenvalue $\alpha$ of $\psi$ has weight $i$ (relative to $k_{v}$), \ie $|\alpha|=p_{v}^{n_{v}i/2}$ then $|\alpha|=p_{v}^{n_{v}ri/{(2r)}}$, with $p^{n_{v}r}_{v}=|k_{L}|$, would have weight $i/r$ relative to $k_{L}$ and $i/r$ might not be an integer. 
However, the  decomposition result works the same if we consider rational weights. Hence $H\otimes_{K_{v}} L =\oplus_{i} (H\otimes_{K_{v}} L)_{i/r}$ where the $L$-linear subspace $(H\otimes_{K_{v}} L)_{i/r}$ is the direct sum of generalized eigenspaces associated to eigenvalues of modulus $p_{v}^{in_{v}/2}= p_{v}^{in_{v}r/{(2r)}}$. 
Since all conjugates of a Weil number of weight $i$ have the same weight, the conjugate of any eigenvalue $\alpha$ of $\psi$ has the same weight. Hence the  action of $\mathrm{Gal}(L/K_{v})$ on $H\otimes_{K_{v}} L$  respects the decomposition and $ (H\otimes_{K_{v}} L)_{i/r}$   descends to a $K_{v}$-linear subspace $H_i$ of $H$ which is  a pure $F$-$K_{v}$-isocrystal of weight $i$.

Assume now  $H=\oplus_{i=m+1}^{n} H^{\prime}_i$ is another decomposition as in (iii). In order to prove that $H^{\prime }_i=H_i$ we may assume $L=K_{v}$. Now $H_i$ is sum of the generalized eigenspaces  associated to eigenvalues of weight $i$. Since the eigenvalues of $\psi_{| H^{\prime}_i}$ have weight $i$, it is $H^{\prime}_i\subseteq H_i$. One concludes then by dimension reason.
\\$(iii)\Rightarrow (i)$. Let $\oplus_{i=m+1}^{n} H_{i}$ be a decomposition as in (iii) and set $W_mH=0$ and $W_{i}H=\oplus_{j=m+1}^{i}H_{j}$ for $m<i\leq n$. This filtration makes $H$ a mixed  $F$-$K_{v}$-isocrystal. 
Let $W_{\d}^\prime H$ be another filtration making $(H,\psi)$ a $F$-$K_{v}$-isocrystal. One proves recursively that the eigenvalues of $\psi_{|W^{\prime}_{i}H}$ have weights $\leq i$. In particular, the image of $W_{i}^\prime H$ in   $ \gr_{i+1}^WH_{\d} $ is trivial. Hence $W_{i}^\prime H\subseteq W_{i}H$. The reverse inclusion is proved analogously. Hence $W_{\d}H=W_{\d}^\prime H$. 
\end{proof}

It follows from the lemma above that any morphism of  $F$-$K_{v}$-isocrystals respects the decompositions in pure $F$-$K_{v}$-isocrystals, thus the filtrations.

\subsection{The enriched Ogus category  $\mathbf{FOg}(K)$}\label{sec:mixedOgus}

We say that an object $\cV$ of $\mathbf{Og}(K)$ is pure of weight $i$ if for almost all unramified places  $v$ the $K_{v}$-vector space $V_{v}$ and the $K_{v}$-linear
operator $F_{v}^{n_{v}}$ on $V_{v}$ define a pure $F$-$K_{v}$-isocrystal of weight $i$ relative to $k_{v}$.
\begin{defn}
Let  $\mathbf{FOg}(K)$ be the category whose objects are objects  $\cV$ in $\mathbf{Og}(K)$ endowed with an increasing finite exhaustive filtration
 \[0= W_{m}\cV\subseteq
\cdots \subseteq W_{i}\cV\subseteq W_{i+1}\cV\subseteq \cdots \subseteq W_{n}\cV=\cV \] in
 $\mathbf{Og}(K)$ such that for every $i>m$ the graded $\gr_{i}^W\cV\df W_{i}\cV/W_{i-1}\cV$ is pure of weight $i$. 
In particular, $(V_{v}, F_{v}^{n_{v}})$ is a mixed $F$-$K_{v}$-isocrystal for almost all $v$. Morphisms are morphisms in $\mathbf{Og}(K)$.
\end{defn}
Morphisms in $\mathbf{FOg}(K)$ actually respect the filtration by the following:
\begin{lemma} \label{properfil}
With the notation above we have the following properties.
\begin{enumerate}
\item Given an object $\cV$ of $\mathbf{Og}(K)$  there is at most one filtration  $W_{\d}\cV$ on $\cV$ such that $(\cV, W_{\d}\cV)$ is an object of $\mathbf{FOg}(K)$.
\item Let $(\cV, W_{\d}\cV)$, $(\cV^\prime, W_{\d}\cV^\prime)$ be objects of $\mathbf{FOg}(K)$. Then any morphism $\cV\to \cV^\prime$ in $\mathbf{Og}(K)$ respects the filtration and is strict.
 \item $\mathbf{FOg}(K)$ is a $\Q$-linear abelian category.
 \item  Given an object $(\cV, W_{\d}\cV)$ of $\mathbf{FOg}(K)$ for almost all $v$ we have that $\cV_{v}$ has a unique decomposition as a direct sum of pure $F$-$K_{v}$-isocrystal of different weights.
\end{enumerate}
\end{lemma}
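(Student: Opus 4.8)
The plan is to deduce everything from Lemma \ref{wn}, transferring the uniqueness and strictness statements for mixed $F$-$K_v$-isocrystals to the $2$-colimit category $\mathbf{Og}(K)$. First I would prove (1): if $W_\d\cV$ and $W'_\d\cV$ both exhibit $\cV$ as an object of $\mathbf{FOg}(K)$, then for almost all $v$ both induce filtrations on $V_v$ respected by $F_v^{n_v}$ whose $i$-th graded piece is a pure $F$-$K_v$-isocrystal of weight $i$; by the uniqueness clause $(i')$ of Lemma \ref{wn} these two filtrations of $V_v$ agree for almost all $v$. Since a subobject of $\cV$ in $\mathbf{Og}(K)$ is determined by a sub-$K$-vector space of $V$ (the $F_v$-compatibility being automatic once it holds for almost all $v$ after the identification of $K_v$-structures), and $W_i\cV$ and $W'_i\cV$ become equal after $\otimes_K K_v$ for infinitely many $v$ — hence are equal as subspaces of $V$ — we get $W_\d\cV=W'_\d\cV$. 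The same mechanism gives (4): for almost all $v$ the pair $(\cV_v,F_v^{n_v})$ is a mixed $F$-$K_v$-isocrystal, so Lemma \ref{wn}(iii) furnishes the unique decomposition into pure pieces of distinct weights.

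For (2), let $f\colon\cV\to\cV'$ be a morphism in $\mathbf{Og}(K)$. Pick a cofinite set of places over which $f$ is represented by a $K$-linear map $V\to V'$ compatible with all the $F_v$'s, hence with the $F_v^{n_v}$'s; for almost all such $v$ the map $f\otimes K_v$ is a morphism of mixed $F$-$K_v$-isocrystals, so by the remark following Lemma \ref{wn} it respects the weight filtrations and is strict with respect to them. I would then argue that strictness and filtration-compatibility are detected after $\otimes_K K_v$ for a single (indeed for infinitely many) $v$: the containment $f(W_i\cV)\subseteq W_i\cV'$ and the equality $f(V)\cap W_i V' = f(W_i V)$ are statements about sub-$K$-vector spaces, and two sub-$K$-vector spaces of a finite-dimensional $K$-space coincide iff they do so after base change to $K_v$. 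Hence $f$ respects $W_\d$ and is strict.

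Finally (3): abelianness of $\mathbf{Og}(K)$ is recalled from \cite[\S 7.1.5]{An} (it is a filtered $2$-colimit of the evidently $\Q$-linear additive categories $\mathcal{C}_{\cP'}$, with kernels and cokernels computed on underlying vector spaces, where semilinear $F_v$ descends to kernel and cokernel). For $\mathbf{FOg}(K)$ one takes the kernel and cokernel of a morphism $f\colon\cV\to\cV'$ in $\mathbf{Og}(K)$ and equips them with the induced filtrations $W_i\ker f\df W_i\cV\cap\ker f$ and $W_i\coker f\df \image(W_i\cV'\to\coker f)$; by part (2) and strictness these are again objects of $\mathbf{FOg}(K)$ with graded pieces pure of the correct weight (subquotients of pure $F$-$K_v$-isocrystals of weight $i$ are pure of weight $i$, by Lemma \ref{wn}(ii) applied to eigenvalues), and the canonical map $\coker(\ker f\to\cV)\to\ker(\cV'\to\coker f)$ is an isomorphism in $\mathbf{Og}(K)$ carrying one induced filtration to the other by uniqueness (part (1)). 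The main obstacle is the bookkeeping in (1) and (2): making precise that "equality/strictness after $\otimes_K K_v$ for almost all $v$" upgrades to a statement over $K$, and keeping track of the fact that the structural data $(V_v,F_v,g_v)$ is only defined up to discarding finitely many places, so that all the "for almost all $v$" quantifiers can be intersected into one cofinite set on which every argument runs simultaneously.
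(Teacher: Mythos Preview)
Your proposal is correct and follows essentially the same route as the paper: parts (1) and (4) are pulled back from Lemma \ref{wn}, part (2) uses that morphisms of mixed $F$-$K_v$-isocrystals respect the weight filtration and are strict, and then one descends these statements from $K_v$ to $K$ (you phrase this as ``equality of sub-$K$-spaces is detected after $\otimes_K K_v$'', the paper says ``$K\to K_v$ is faithfully flat'' --- the same mechanism); part (3) is then formal from (2). Your treatment of (3) is more explicit than the paper's one-line ``follows easily from (2)'', but the content is identical.
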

\begin{proof} (1) and (4) follow from Lemma \ref{wn}.
Since the morphisms between pure $F$-$K_{v}$-isocrystals of different weights are trivial, it follows that morphisms between mixed  $F$-$K_{v}$-isocrystal with integral  weights respect the filtrations and are strict. Using the fact that $g_{v}$ induces an isomorphism $W_{\d}V_{v} \simeq W_{\d}V\otimes_{K} K_{v}$ and that the map $K\to K_{v}$ is faithfully flat, we get (2).
Assertion (3) follows easily from (2).
\end{proof}

Note that for $(\cV, W_{\d}\cV)\in\mathbf{FOg}(K)$ and an integer $n\in \Z$ we have that
\[(\cV, W_{\d}\cV)(n)\df  (\cV(n), W_{ \d}{}_{+ 2n}\cV(n)) \in\mathbf{FOg}(K)\] 
such that  $\gr_{i}^W\cV(n)\df W_{i+2n}\cV(n)/W_{i+2n-1}\cV(n)$ is pure of weight $i$ (\cf Definition \ref{OgDef}).

\subsection{The weight filtration on $\mathbf{FOg}(K)$}
Consider the Serre subcategories $\mathbf{FOg}(K)_{\leq n} $  of $\mathbf{FOg}(K)$ given by objects $(\cV, W_{\d}\cV)$ of $\mathbf{FOg}(K)$ with $W_{n}\cV=\cV$. We get a filtration
\[\cdots \to \mathbf{FOg}(K)_{\leq n}\longby{\iota_{n}} \mathbf{FOg}(K)_{\leq n+1}\longby{\iota_{n+1}}\cdots \]
Recall that a filtration of an abelian category by Serre subcategories is a weight filtration (in the sense of \cite[Def. D.1.14]{BVK}) if it is separated, exhaustive and split, \ie all the inclusion functors $\iota_{n}$ have exact right adjoints.
\begin{lemma} $\mathbf{FOg}(K)_{\leq n}\subset \mathbf{FOg}(K)$ is a weight filtration.\end{lemma}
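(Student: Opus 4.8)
The goal is to check three things for the filtration $\mathbf{FOg}(K)_{\leq n}\subset\mathbf{FOg}(K)$: it is separated, exhaustive, and split (each $\iota_n$ has an exact right adjoint). Separatedness and exhaustiveness are essentially built into the definition: every object $(\cV,W_\d\cV)$ of $\mathbf{FOg}(K)$ carries a \emph{finite} exhaustive filtration $0=W_m\cV\subseteq\cdots\subseteq W_n\cV=\cV$, so it lies in $\mathbf{FOg}(K)_{\leq n}$ for $n$ large, giving exhaustiveness; and it lies in $\mathbf{FOg}(K)_{\leq n}$ for \emph{all} $n$ only if $W_n\cV=\cV$ for all $n$, forcing (by uniqueness of the weight filtration, Lemma~\ref{properfil}(1), together with Lemma~\ref{wn}) $\cV=0$, so $\bigcap_n\mathbf{FOg}(K)_{\leq n}=0$, which is separatedness.

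The real content is the split property. First I would define, for an object $(\cV,W_\d\cV)\in\mathbf{FOg}(K)$, the candidate right adjoint $r_n$ to $\iota_n$ by $r_n(\cV,W_\d\cV)\df(W_n\cV, W_\d(W_n\cV))$, i.e.\ the largest subobject of weights $\leq n$. One must check first that $W_n\cV$, with its induced filtration, is again an object of $\mathbf{FOg}(K)$: it is a subobject in $\mathbf{Og}(K)$ by Lemma~\ref{properfil}, its graded pieces are among those of $\cV$ hence pure of the right weight, and $\gr^W_i(W_n\cV)=0$ for $i>n$. Then the adjunction $\Hom_{\mathbf{FOg}(K)_{\leq n}}(\cU,\iota_n^{-1}\text{---})$-vs-$\Hom_{\mathbf{FOg}(K)}(\iota_n\cU,\text{---})$ is immediate: any morphism $f\colon\iota_n\cU\to\cV$ in $\mathbf{FOg}(K)$ respects weight filtrations (Lemma~\ref{properfil}(2)), and since $W_n(\iota_n\cU)=\iota_n\cU$ the image of $f$ lands in $W_n\cV$, so $f$ factors uniquely through $r_n(\cV)\hookrightarrow\cV$. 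This gives $r_n$ as a right adjoint to $\iota_n$ on the nose.

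Finally I would verify that $r_n$ is exact. Left-exactness is automatic for a right adjoint, so the issue is right-exactness: given a short exact sequence $0\to\cV'\to\cV\to\cV''\to 0$ in $\mathbf{FOg}(K)$, one needs $W_n\cV\to W_n\cV''$ to be surjective. Here is where I would use Lemma~\ref{wn}(iii)/Lemma~\ref{properfil}(4): for almost all $v$ the sequence $0\to\cV'_v\to\cV_v\to\cV''_v\to 0$ is a sequence of mixed $F$-$K_v$-isocrystals, which (morphisms being strict, and the weight decompositions being canonical) splits weight-by-weight, so $\bigoplus_{i\leq n}(\cV_v)_i\to\bigoplus_{i\leq n}(\cV''_v)_i$ is surjective; translating back via the faithfully flat descent along $K\to K_v$ used in the proof of Lemma~\ref{properfil}, $W_n\cV\to W_n\cV''$ is surjective in $\mathbf{Og}(K)$, hence in $\mathbf{FOg}(K)$. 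Combined with left-exactness this gives exactness of $r_n$, and the three conditions together show $\mathbf{FOg}(K)_{\leq n}$ is a weight filtration.

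**Main obstacle.** Everything reduces cleanly to Lemma~\ref{wn} and Lemma~\ref{properfil}; the only step requiring genuine care is the exactness (surjectivity) claim for $r_n$, i.e.\ that taking the weight-$\leq n$ part commutes with quotients. The point to get right is that strictness of morphisms of mixed $F$-$K_v$-isocrystals (already recorded after Lemma~\ref{wn}) forces the short exact sequence to respect the canonical weight decompositions, so no extension classes between different weights can obstruct surjectivity on the weight-$\leq n$ truncation; this has to be phrased at the level of $\mathbf{Og}(K)$ using the compatible isomorphisms $g_v$ and faithful flatness of $K\to K_v$, exactly as in Lemma~\ref{properfil}(2).
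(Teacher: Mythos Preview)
Your proposal is correct and follows essentially the same approach as the paper: separatedness and exhaustiveness from finiteness of the filtration, the right adjoint $r_n(\cV,W_\d\cV)=(W_n\cV,W_\d^{\leq n}\cV)$, and adjunction via Lemma~\ref{properfil}(2). The only difference is efficiency in the exactness step: the paper simply cites the strictness clause of Lemma~\ref{properfil}(2), since strictness of filtered morphisms is exactly what makes the functor $W_n(-)$ exact on short exact sequences, whereas you unpack this by passing to the $v$-adic weight decomposition and descending via faithful flatness of $K\to K_v$---which is precisely how Lemma~\ref{properfil}(2) itself was proved, so your argument is not wrong, just redundant.
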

\begin{proof}
 In fact, the filtration is clearly separated, \ie $\cap \mathbf{FOg}(K)_{\leq n}=0$, and  exhaustive, \ie  $\cup \mathbf{FOg}(K)_{\leq n}=\mathbf{FOg}(K)$. The claimed adjoints are given by $(\cV, W_{\d}\cV)\mapsto (W_{n}\cV, W_{\d}^{\leq n}\cV)$  where $W_{i}^{\leq n}\cV= W_{i}\cV$ for $i< n$ and $W_{i}^{\leq n}\cV=  W_{n}\cV$ for $i\geq n$ and they are exact by Lemma \ref{properfil} (2).
\end{proof}
We have that \[ \gr_{i}^W\cV=W_{i}\cV/W_{i-1}\cV  \in \mathbf{FOg}(K)_{i}\df \mathbf{FOg}(K)_{\leq i}/\mathbf{FOg}(K)_{\leq i-1} . \]
Note that these categories $\mathbf{FOg}(K)_{i}$ are not necessarily semi-simple.
\bigskip

We may introduce a notion of effectivity following \cite[7.4.2]{An} or \cite[17.4.4]{BVK}.

\begin{defn}\label{fogeff} 
An object $\left(V,  (V_{v}, F_{v})_{v\in \cP'}, (g_{v})_{v\in \cP'}\right)$ of $ \mathbf{Og}(K)$ is said to be  {\it l-effective}\, if there exists a $\cO_K$-lattice $L$ of $V$ such that the image under $g_v$ of $L \otimes_{\cO_K}\cO_{K_v}$ in $V_v$ is preserved by the $F_v$ for almost all $v\in \cP'$. 
Denote $\mathbf{Og}(K)^{\eff}\subset \mathbf{Og}(K)$ the full subcategory of l-effective objects.

Similarly define $\mathbf{FOg}(K)^{\eff}\subset \mathbf{FOg}(K)$ as the full subcategory given by objects  $(\cV, W_{\d}\cV)$ such that $\cV$ is in $\mathbf{Og}(K)^{\eff}$.

Moreover, we say that an object  $(\cV, W_{\d}\cV)$ of $\mathbf{FOg}(K)$ is {\it e-effective} if  the eigenvalues of $F_{v}^{n_{v}}$ on $ V_{v}$ are algebraic integers for almost all $v$.
\end{defn}

 \begin{remarks}\label{r.leff} 
(a) An object $\left(V,  (V_{v}, F_{v})_{v\in \cP'}, (g_{v})_{v\in \cP'}\right)$ of $ \mathbf{Og}(K)$ is l-effective if, and only if,   {\it every} $\cO_K $-lattice  $L$ of $V$ satisfies the condition in Definition \ref{fogeff}. Indeed any two  $\cO_K $-lattices $L,L'$ of $V$ coincide over  $\cO_K[1/n]$ for $n$ sufficiently divisible. Hence if  $L$ satisfies the condition in Definition \ref{fogeff} for any $v\in \cP^{\prime\prime}\subset \cP'$, the same does $L'$ for any $v\in  \cP^{\prime\prime}\cap \cP_{n}$.

(b)  
Since any $\cO_K[1/n]$-lattice $L$ of  $V$ is isomorphic to the base change along $\cO_K\to \cO_K[1/n]$ of a $\cO_K $-lattice,   an object $\left(V,  (V_{v}, F_{v})_{v\in \cP'}, (g_{v})_{v\in \cP'}\right)$ of $ \mathbf{Og}(K)$ is  l-effective if, and only if, there exists a positive integer $n$ and a $\cO_K[1/n]$-lattice $L$ of $V$ such that the image under $g_v$ of $L \otimes_{\cO_K}\cO_{K_v}$ in $V_v$ is preserved by the $F_v$ for  all $v\in \cP_{n}$.

(c)  
If $\cV$  in $\mathbf{Og}(K)$ is l-effective, the same is any subobject $\cV'\subset \cV$  in $\mathbf{Og}(K)$.  As a consequence   given $(\cV, W_{\d}\cV)\in \mathbf{FOg}(K)^{\eff}$ all $W_{i}\cV$ and $\gr_{i}^W\cV$ are in  $\mathbf{FOg}(K)^{\eff}$ as well.
\end{remarks}

According with \cite[Def. 14.3.2]{BVK} we also set a subcategory of level $\leq 1$ objects:
\begin{defn}\label{fog1} Call Artin-Lefschetz objects those  objects of $\mathbf{FOg}(K)_{-2}$ which are $(\cV, W_{\d}\cV)(1)$ for $(\cV,  W_{\d}\cV)$ both l-effective and e-effective of weight zero. Denote $\mathbf{FOg}(K)_\L$ the Serre subcategory of $\mathbf{FOg}(K)_{-2}$ given by Artin-Lefschetz objects.
 Denote $\mathbf{FOg}(K)_{(1)}$ the full subcategory of $\mathbf{FOg}(K)$ given by those  $(\cV,  W_{\d}\cV)$ that are e-effective of weights $\{-2,-1,0\}$ such that $ W_{-2}\cV$ is an Artin-Lefschetz object and $(\cV,  W_{\d}\cV)(-1)$ is l-effective.
\end{defn}

\subsection{The Bost-Ogus category $\mathbf{BOg}(K)$}\label{s.bog}
Let $\mathbf{BOg}(K)$ denote the $\Q$-linear category whose objects are finite dimensional $K$-vector spaces $V$ such that the reduction modulo $\p_{v}$ of $V$ is equipped with a $\sigma_{v}$-semilinear endomorphism for almost every unramified place $v$ of $K$. Morphisms are morphisms of $K$-vector spaces which respect the extra structure. The category $\mathbf{BOg}(K)$ is the category denoted $\mathbf{Frob_{ae}}(K)$ in \cite[2.3.2]{Bo}. 

Also this category can be better described as a $2$-colimit  category.  Recall that, given a positive integer $n$,  $\cP_{n}$ denotes  the subset of $\cP$ consisting of those $v$ such that $n$ is invertible in $\cO_{K_v}$ (see Remark \ref{rem.pn}).  
Let $\mathcal L_{\cP_n}$ be the category whose objects are of the type $\left(V, L, ({}^{\flat}F_{v})_{v\in \cP_n}\right)$ where $V$ is a finite dimensional $K$-vector space, $L$ is an $\cO_K[1/n]$-lattice in $V$ and   ${}^{\flat}F_{v}$ is a $\sigma_{v}$-semilinear endomorphism on  $L\otimes_{\cO_{K}} k_{v}$.  A morphism in $\mathcal L_{\cP_n}$ is the data of a homomorphism of  lattices  (and by $K$-linearization of vector spaces)  which respects the given ${}^{\flat}F_{v}$ for all $v$. We then define
\[\mathbf{BOg}(K):=\colim_{\cP_n \subseteq \cP}\ \mathcal{L}_{\cP_n}.
\]
Note that there is a functor (see also \cite[7.4.2]{An})
\begin{equation}\label{psifun}
\Psi\colon\mathbf{FOg}(K)^{\eff}\to \mathbf{BOg}(K) \end{equation}
defined as follows. 
Consider an object $(\cV, W_\d\cV)$ in $\mathbf{FOg}(K)^{\eff}$ and assume $\cV$  in $\mathbf{Og}(K)^{\eff}$ is represented by  an object  $\left(V,  (V_{v}, F_{v})_{v\in \cP^{\prime}}, (g_{v})_{v\in \cP^{\prime}}\right)$ of   $ \mathcal C_{\cP^{\prime}}$. Let $L$ be any  $\cO_K$-lattice of $V$. By Remarks \ref{r.leff} (a) and \ref{rem.pn}   there  exists a positive integer $n$ such that  $\cP_{n}\subset \cP^{\prime}$ and the image under $g_v$ of $L \otimes_{\cO_K}\cO_{K_v}$ in $V_v$ is preserved by  $F_v$ for all  $v\in\cP_{n}$. 
Let $\Psi (\cV, W_\d\cV)$ be represented by $ \left(V, L\otimes_{\cO_K}\cO_{K}[1/n], ({}^{\flat}F_{v})_{v\in \cP_n}\right)$ in $\mathcal L_{\cP_{n}}$ where ${}^{\flat}F_{v}$ is the reduction modulo $\p_{v}$ of the mapping on $L \otimes_{\cO_K}\cO_{K_v}$ induced by $F_{v}$.  Note that a different choice of a lattice   of $V$ will provide the same lattice over $\cO_{K}[1/n]$ for $n$ sufficiently divisible and hence the functor is well defined.
Furthermore, remark that the functor $\Psi$ is not full (\cf Remark \ref{psibog}).

\section{Logarithms and universal extensions}

\subsection{The $p$th power operation}
Recall from \cite[II \S7 n.2 p. 273]{DG} and \cite[Exp. ${\rm VII}_{\rm A}$, \S 6]{sga3} that given a field $k$ of characteristic $p>0$ and a $k$-group scheme $G$ one can define a  $p$th power operation $x \mapsto x^{[p]}$ on $\Lie(G)$ as follows. 
Recall that  \[\Lie (G)=\mathrm{Ker}(G(k[\varepsilon]/(\varepsilon^2))\to G(k))\]
 and  for any $x\in \Lie(G)$ write $e^{\varepsilon x}$ for the corresponding element in $G(k[\varepsilon]/(\varepsilon^2))$.
Let $k[\varsigma,\pi]\subseteq k[\varepsilon_{1},\dots,\varepsilon_{p}]/(\varepsilon^2_{1},\dots, \varepsilon^2_{p})$, with $\varsigma=\sum_{i=1}^p\varepsilon_{i}, ~\pi=\prod_{i=1}^p \varepsilon_{i}$, be the subalgebra generated by the elementary symmetric polynomials in $\varepsilon_{i}$. Observe that $\varsigma^p=0$, $\varsigma\pi=0$ and $\pi^2=0$. Then $e^{\varepsilon_{1} x}e^{\varepsilon_{2} x}\cdots e^{\varepsilon_{p} x}$ makes sense as element in \[\mathrm{Ker}(G(k[\varepsilon_{1},\dots,\varepsilon_{p}]/(\varepsilon^2_{1},\dots, \varepsilon^2_{p}))\to G(k))\]
 where we use the multiplicative notation for the group law on $G$. Since  $e^{\varepsilon_{1} x}e^{\varepsilon_{2} x}\cdots e^{\varepsilon_{p} x}$ is invariant by permutations of the $\varepsilon_{i}$'s (see \cite[II \S4, 4.2 (6) p. 210]{DG}), it is indeed an element of $\mathrm{Ker}(G(k[\varsigma, \pi])\to G(k))$. Consider now the canonical projection $k[\varsigma, \pi]\to k[\pi], \varsigma\mapsto 0$.
It induces a map \[G(k[\varsigma, \pi])\to G(k[\pi]).\] 
Let $e^{\pi y}$ be the image of $e^{\varepsilon_{1} x}e^{\varepsilon_{2} x}\cdots e^{\varepsilon_{p} x}$  via this map. We further have that $e^{\pi y}$ is mapped to the  unit section via the map $G(k[\pi])\to G(k)$ induced by $\pi\mapsto 0$. We have
\[\xymatrix{G(k[\varsigma,\pi])\ar[r]\ar[d]& G(k)\\
G(k[  \pi])\ar[ur]&
}\qquad
\xymatrix{e^{\varepsilon_{1} x}e^{\varepsilon_{2} x}\cdots e^{\varepsilon_{p} x}\ar[r]\ar[d]&1\\
e^{\pi y}\ar[ur]&}
\]
Hence $y\in \Lie(G)= \mathrm{Ker}(G(k[\pi])\to G(k))$ and we define $x^{[p]}:=y $.
The map \begin{equation}\label{pth}
{}^{[p]}\colon  \Lie(G) \to  \Lie(G)\ \ \ x\mapsto x^{[p]}
\end{equation}
endows $\Lie(G)$ with a structure of Lie $p$-algebra over $k$ (see \cite[II, \S 7 Prop. 3.4 p. 277]{DG}). 
If $G$ is  commutative, then  $[x,y]=0$  in $\Lie(G )$ (see \cite[Exp. II, Def. 4.7.2]{sga3}) and hence ${}^{[p]}$ is $p$-linear, \ie $(x+y)^{[p]}=x^{[p]}+y^{[p]}, (\lambda x)^{[p]}=\lambda^px^{[p]} $ for $\lambda\in k$ and $x,y\in \Lie(G )$.
Up to the usual identification of $\Lie(G)$ with the invariant derivations of $G$, the  $p$th power operation maps a derivation $D$ to $D^p $ (\cf \cite[Exp. VII${}_{\rm A}$, \S 6.1]{sga3}, \cite[II, \S 7, Prop. 3.4 p. 277]{DG}).

Let $\sigma$ denote the Frobenius map on $k$. For the sake of exposition we provide a proof of the following well known fact (\cf  \cite[Exp. VII${}_A$ \S 4]{sga3}). 
\begin{lemma}\label{l.pth}
Let $G$ be a commutative algebraic $k$-group. Then the  $p$th power operation \eqref{pth} is a  $\sigma$-semilinear map and coincides with the map on Lie algebras associated to the Verschiebung $\mathrm{Ver}_{G}\colon G^{(p)} \to G$.
\end{lemma}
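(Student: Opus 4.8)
The plan is to verify the two assertions — $\sigma$-semilinearity of the $p$th power operation and its identification with the Lie algebra map induced by the Verschiebung — in that order, since the first is really a computation internal to the construction of ${}^{[p]}$ given above, while the second requires unwinding the definition of $\mathrm{Ver}_G$. For semilinearity, I would argue as follows. Additivity $(x+y)^{[p]}=x^{[p]}+y^{[p]}$ for $G$ commutative was already recorded above (it follows from $p$-linearity of ${}^{[p]}$ in the commutative case, \cite[Exp. II, Def. 4.7.2]{sga3}); so the only thing left is the scalar behaviour. One must show $(\lambda x)^{[p]}=\lambda^{p}x^{[p]}=\sigma(\lambda)\,x^{[p]}$ for $\lambda\in k$. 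This is again the commutative-case statement $(\lambda x)^{[p]}=\lambda^{p}x^{[p]}$ quoted above, combined with the fact that raising to the $p$th power on $k$ \emph{is} the Frobenius $\sigma$. So in fact the first assertion is immediate from what has already been set up, and I would simply say so.

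The substance is the second assertion. Recall that the Verschiebung $\mathrm{Ver}_{G}\colon G^{(p)}\to G$ is characterized by $\mathrm{Ver}_{G}\circ \mathrm{Fr}_{G}=[p]_{G}$ (multiplication by $p$) and $\mathrm{Fr}_{G}\circ \mathrm{Ver}_{G}=[p]_{G^{(p)}}$, where $\mathrm{Fr}_{G}\colon G\to G^{(p)}$ is the relative Frobenius. On Lie algebras, $\mathrm{Fr}_{G}$ induces the zero map (the relative Frobenius kills $\Lie$, since it is identity on the underlying topological space and the $p$th-power map on functions, which is $0$ on the augmentation ideal modulo its square), while $\Lie(G^{(p)})\cong \Lie(G)\otimes_{k,\sigma}k$ is the Frobenius twist, and under this identification $\Lie(\mathrm{Ver}_{G})$ becomes a $\sigma$-semilinear endomorphism of $\Lie(G)$. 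So the claim is that this endomorphism equals ${}^{[p]}$. The cleanest route, following \cite[Exp. VII${}_{A}$ \S 4]{sga3}, is to use the identification of $\Lie(G)$ with the invariant derivations (equivalently, the invariant vector fields) of $G$ together with the description of ${}^{[p]}$ recalled just above as $D\mapsto D^{p}$. The relative Frobenius corresponds, on the level of distributions/hyperalgebra, to the $p$th-power Frobenius on the symmetric/divided-power structure, and one checks that its transpose-adjoint, the Verschiebung, sends the primitive element $D$ in degree $p$ of the divided power algebra to $D^{[p]}=D^{p}$ — this is precisely the formula $\gamma_{p}(D)\mapsto D^{p}$ relating the divided power $\gamma_p$ and the $p$th power in a restricted enveloping algebra.

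Concretely I would carry it out by reducing to the universal case and then to a direct verification on $\G_{a}$ and $\G_{m}$ (or, better, on the formal additive group, since $\Lie$ and ${}^{[p]}$ and $\mathrm{Ver}$ all only see the formal completion at the identity), where both sides can be written down explicitly: for $\widehat{\G}_{a}$ the Verschiebung is $0$ and ${}^{[p]}$ is $0$; for $\widehat{\G}_{m}$ the $p$th power operation on the canonical generator $t\,d/dt$ of $\Lie$ is $t\,d/dt$ again (it is the standard computation $(t\,\partial_{t})^{p}=t\,\partial_{t}$ in characteristic $p$), matching $\Lie(\mathrm{Ver}_{\G_m})$ under $\mathrm{Ver}_{\G_m}=[1]$... — more precisely one uses that every commutative formal group of finite type embeds, after passing to a suitable extension or up to isogeny, into products of such, together with functoriality of all three constructions in $G$, and that a $k$-homomorphism $G\to H$ intertwines ${}^{[p]}$ on both sides and also $\mathrm{Ver}_{G}$, $\mathrm{Ver}_{H}$ (by naturality of Frobenius and Verschiebung). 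The main obstacle is precisely this last functorial/reduction step done cleanly — pinning down the compatibility of the divided-power (hyperalgebra) description of $\mathrm{Ver}$ with the derivation description of ${}^{[p]}$ — rather than any hard estimate; everything else is bookkeeping. Since the statement is classical (\cite[Exp. VII${}_{A}$ \S 4--6]{sga3}, \cite[II \S 7]{DG}), I would keep the write-up short, giving the $\widehat\G_m$ computation as the representative case and citing \loccit{} for the general functorial argument.
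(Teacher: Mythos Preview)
Your treatment of $\sigma$-semilinearity is fine and matches the paper: it is immediate from the $p$-linearity already recorded.

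For the identification with $\Lie(\mathrm{Ver}_G)$, your concrete reduction step has a genuine gap. The claim that ``every commutative formal group of finite type embeds, after passing to a suitable extension or up to isogeny, into products'' of $\widehat\G_a$ and $\widehat\G_m$ is false: the formal group of a supersingular elliptic curve over $\bar\F_p$ is a connected $p$-divisible group of slope $1/2$, and slopes are isogeny invariants, so it is not isogenous to anything built from $\widehat\G_m$ (slope $1$), while $\widehat\G_a$ is not even $p$-divisible. Your hyperalgebra sketch (the $\gamma_p(D)\mapsto D^p$ formula) points in the right direction and is essentially the content of \cite[Exp.~VII${}_A$, \S 4]{sga3}, but you do not actually carry it out, and the explicit $\G_a/\G_m$ verification you offer as the ``concrete'' route does not suffice to cover the general case.

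The paper takes a quite different route. Since $\Lie(\mathrm{Fr}_G)=0$, functoriality of both ${}^{[p]}$ and $\mathrm{Ver}$ reduces the question to the kernel of $\mathrm{Fr}_G$, which is finite and infinitesimal. By \cite[Thm.~3.1.1]{BBM} any such group scheme embeds as a closed subgroup of an abelian variety, so functoriality again reduces to the abelian-variety case. There the claim follows from Mumford's identification $\Lie(A)\cong\mathrm{H}^1(A^*,\cO_{A^*})$, under which ${}^{[p]}$ becomes the Frobenius on cohomology, together with the fact that duality on abelian varieties exchanges Frobenius and Verschiebung. This bypasses the hyperalgebra computation entirely, trading it for the BBM embedding theorem; your route via \cite[Exp.~VII${}_A$, \S 4]{sga3} is a legitimate alternative, but then you should simply cite that result rather than attempt the flawed $\G_a/\G_m$ reduction.
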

\begin{proof} The first claim is obvious.  By  functoriality of the Veschiebung  (see \cite[Exp. ${\rm VII}_{\rm A}$,  4.3]{sga3}) and the fact that $\Lie(\mathrm{Fr}_{G})=0$ with $ \mathrm{Fr}_{G}\colon G\to G^{(p)}$ the (relative) Frobenius,   it suffices to show the second claim replacing $G$ by the kernel of $\mathrm{Fr}_{G}$: we thus assume $G$ to be finite (infinitesimal). By \cite[Thm. 3.1.1]{BBM} we can embed $G$ as a closed subgroup-scheme of an abelian variety. By functoriality of the Verschiebung and of the $p$th power operation we can further reduce to the case of abelian varieties. The latter follows from  Example \ref{ex.gagm} (b) below as duality on abelian varieties exchanges Frobenius with Verschiebung (\cf \cite[Prop. 7.34]{GM}).
\end{proof}

\begin{examples}\label{ex.gagm}
(a) It follows from \cite[II, \S 7 Exemples 2.2 p. 273]{DG} that the $p$th power operation on  $\Lie(\G_{a})$ is the zero map $x\mapsto 0$, while  the $p$th power operation on $\Lie(\G_{m})$ is given by $x\mapsto x^p$. 

 (b) When  $G=A$ is an abelian variety and $A^*$ is the dual abelian variety,  it is known that there is a natural isomorphism
$\Lie(A)\simeq \mathrm H^1(A^*, \cO_{A^*})$ and the  $p$th power operation on $\Lie(A)$ corresponds to the Frobenius map on $\mathrm H^1(A^*, \cO_{A^*})$, \ie  the $\sigma$-semilinear map induced by the Frobenius homomorphism $\alpha\mapsto \alpha^p$ on $\cO_{A^*}$ (see \cite[\S 15, Thm. 3]{Mu}). 
\end{examples}
\smallskip

\subsection{The logarithms}
Let $k$ be a finite field of characteristic $p$.   Let $W(k)$ be the  ring of Witt vectors over $k$,  $K_{0}$  its quotient field,  and set $W_{n}(k)\df W(k)/(p^{n})$. Let $G$ be a group scheme of finite type over $W(k)$.
As claimed in \cite[III, 5.4.1]{ega} we have that
\[ G(W(k)) = \varprojlim G(W_{n}(k)). \] 
(The separatedness hypothesis in \loccit
can be ignored since $W(k)$ is local.)  
 
Consider the canonical homomomorphism of groups
\begin{equation}\label{rhov}
\rho \colon G(W(k))\to G(k)
\end{equation}
induced by the closed immersion $\spec(k)\to \spec (W(k))$.
Note that as $k $ is a finite field, the $k $-valued points $G(k)$ of $G$ form a finite, and hence torsion, group.
In particular, if we denote $\Gamma $ the kernel of $\rho $ in \eqref{rhov}, we obtain an isomorphism of $\Q$-vector spaces
\begin{equation} \label{gg} G(W(k))\otimes_{\Z} \Q \simeq  \Gamma \otimes_{\Z} \Q.
\end{equation}
\smallskip

Assume from now on that $G$ is a {\it smooth} and {\it commutative} $W(k)$-group scheme and let $G_{n}$ be the base change of $G$ to $S_n=\spec (W_n(k))$; in particular  $G_1$ denotes the special fiber of $G$. Let $\mathcal J$ be the ideal sheaf of the unit section of $G_{n}$ and recall that $\cO_{G_n}/\mathcal J^N $ is a finite and free $W_n(k)$-module for any $N>0$.  
Hence  $D=\varinjlim_N\Hom_{W_n(k)\text{-mod}}(\cO_{G_n}/\mathcal J^N, W_n(k)) $ is a coalgebra with a $W_{n}(k)$-algebra structure induced by the group structure on $G_{n}$. The flatness of $D$ over $W_{n}(k)$ ensures that the PD-structure on $pW_{n}(k)$ extends uniquely to a PD-structure $(\gamma_m)$ on $pD $ and hence there exist 
two mutually inverse maps 
\begin{equation*}
\exp \colon pD \to (1+pD )^*,\qquad \log\colon(1+pD )^* \to pD , 
\end{equation*}
defined by   $\exp(x)=\sum_{m\geq 0}\gamma_m(x) $ and $\log(1+x)= \sum_{m\geq 1}(-1)^{m-1}(m-1)!\gamma_m(x)$ (\cite[III, 1.6]{Me}).  Let $\mathrm{Cospec}(D)(W(k))\subset D $ denote the subgroup of $W_{n}(k)$-algebra homomorphisms. Then $p \mathrm{Cospec}(D)(W(k))=  \mathrm{Cospec}(D)(W(k))\cap pD$ consists of those homomorphisms whose reduction modulo $p$ is the homomorphism associated to the unit section of the special fiber of $G$.  Let $\mathrm{Prim}(D)\subset D$ consists of the primitive elements of the coalgebra $D$, \ie those $x\in D$ such that $\Delta(x)=x\otimes 1+1\otimes x$ with $\Delta$ the comultiplication of $D$, and let $p\mathrm{Prim}(D)=\mathrm{Prim}(D)\cap pD$. Then by \cite[III, 2.2.5]{Me} (see also \cite[\S 5.2]{ABe})  there is an  isomorphism of groups $\exp_{G,n}$ which makes the following diagram
\begin{equation}\label{dprim}
\xymatrix{
\ker\big( \Lie(G_n)\to \Lie(G_1)\big) \ar[r]^{\simeq}\ar@{.>}[d]^{\exp_{G,n} }&\ar[d] p\mathrm{Prim}(D)\subseteq pD\ar[d]^{\exp}  \\
\ker\big( G_n(W_n(k))\to G_n(k)\big) \ar[r]^{\simeq}&p(\mathrm{Cospec}(D)(W(k))\subset (1+pD)^*
}
\end{equation}
commute. The vertical arrow on the left can also  be written as an isomorphism 
\[\exp_{G,n}\colon    p\Lie(G_{n}) \longby{\simeq}  \ker\big( G(W_n(k))\to G(k)\big) .\] 
Finally, taking the limit over $n$, one gets the exponential isomorphism for $G$
\begin{equation*}
\exp_{G }\colon  p\Lie(G) \longby{\simeq}  \Gamma . \end{equation*}  
Let  $\log_{G}\colon \Gamma\longby{\simeq}  p\Lie(G)$  denote the inverse of $\exp_G$.  
We set (\cf \cite[\S 2.4, p. 169]{Ta}):

\begin{defn} \label{log2} The logarithm is the isomorphism of $\Q$-vector spaces
\[\log_{G,\Q}\colon G(W(k)) \otimes_{\Z } \Q  \longby{\simeq} \Lie(G)\otimes_{W(k)} K_0\]
obtained by composing \eqref{gg} with  $\log_{G}\otimes {\rm id}_{\Q}$  and recalling that $  p \Lie(G)\otimes_\Z\Q\simeq \Lie(G)\otimes_\Z\Q \simeq \Lie(G)\otimes_{W(k)} K_{0}$
where the isomorphism $  p \Lie(G)\otimes_\Z\Q\longby{\simeq} \Lie(G)\otimes_\Z\Q$ is the $\Q$-linearization of the inclusion $p\Lie(G)\to \Lie(G)$.
\end{defn}

\begin{examples}\label{ex.ga} 
(a) Let $G=\G_{a, W(k)}$. Then  \eqref{rhov}  is the reduction map $ W(k)\to k$ and hence  $\Gamma=pW(k)$. Recall that $\Lie(\G_{a,W_{n}(k)})=\ker( W_{n}(k)+ W_{n}(k)\varepsilon\to W_{n}(k), a+b\varepsilon  \mapsto a)\simeq  W_{n}(k)\varepsilon $ with $\varepsilon^2=0$  and $D= \Hom_{{\rm cont}}(W_{n}(k)[[Z]], W_{n}(k))$.  
Diagram \eqref{dprim} becomes
\[
\xymatrix@R=5mm{
pW_n(k)\varepsilon \ar[r]^{\simeq}\ar@{.>}[d]^{\exp_{\G_a,n}} & p\mathrm{Prim}(D) \ar[d]^{\exp} &&   b\varepsilon  \ar@{|->}[r]&f_b\ar@{|->}[d]\\
pW_{n}(k) \ar[r]^(0.35){\simeq}&p(\mathrm{Cospec}(D)(W(k)) && \exp(f_b)(Z)&\ar@{|->}[l] \exp(f_b) 
} 
\]
where $f_b(1)=0, f_b(Z^r)=rb$ for $r\geq 1$. Since $\gamma_m(f_b)(Z)=0$ for $m\neq 1$ and $\gamma_1(f_b)=f_b$, one gets that $\exp(f_b)(Z)=b$ and hence,  up to the obvious identifications, we may consider $\exp_{\G_a}$ and $\log_{\G_a}$ as the identity maps on $W_n(k)$. Finally $\Lie_{\G_{a},\Q}$  is the identity of $K_0$, up to the usual identifications of $G(W(k))$ and $\Lie(\G_{a})$ with $W(k)$, \ie  $\log_{\G_a,\Q}\colon W(k)\otimes_\Z\Q\to \varepsilon W(k)\otimes_{W(k)}K_0$ in Definition \ref{log2} is $x \mapsto \varepsilon x$.

(b) Let $G=\G_{m, W(k)}=\spec (W(k)[X^{\pm 1}])$. Then   $\Gamma=1+pW(k) \subset W(k)^*$, $\Lie(\G_{m,W_{n}(k)}) \simeq 1+pW_{n}(k)\varepsilon $ with $\varepsilon^2=0$, and $D$ is as in (a) as a co-algebra (with $Z=X-1$).  Diagram \eqref{dprim} becomes
\[
\xymatrix@R=5mm{
1+pW_n(k)\varepsilon \ar[r]^{\simeq}\ar@{.>}[d]^{\exp_{\G_m,n}} & p\mathrm{Prim}(D) \ar[d]^{\exp} &&   1+b\varepsilon  \ar@{|->}[r]&f_b\ar@{|->}[d]\\
1+pW_{n}(k) \ar[r]^(0.4){\simeq}&p(\mathrm{Cospec}(D)(W(k)) &&1+ \exp(f_b)(Z)&\ar@{|->}[l] \exp(f_b) 
}
\]
where $f_b(1)=0, f_b(Z^r)=rb$ for $r\geq 1$. Since $\gamma_m(f_b)$ maps $Z$ to $0$ if $m=0$ and to $b^m/m!$ if $m\geq 1$ then $\exp(f_b)(Z)=\sum_{m\geq 1}b^m/m!$ and  $\exp_{\G_m,n}(1+b\varepsilon)= \sum_{m\geq 0}b^m/m!$.  Hence, up to the obvious identifications,   $\exp_{\G_m}$ is the exponential map $pW(k)\to 1+pW(k)$ and  $\log_{\G_m} $ is the  usual $p$-adic logarithm. Finally the isomorphism $\log_{\G_m,\Q}\colon W(k)^*\otimes_\Z\Q\to 1+( W(k)\varepsilon \otimes_{W(k)}K_0)$ in Definition \ref{log2} is given by 
\[x\otimes 1\mapsto 1+ \frac{\log(1+y)\varepsilon}{(p_v^{n_v}-1)}\] where $x^{p_v^{n_v}-1}=1+y$, $y\in pW(k)$.
\end{examples} 
 
\begin{remark}\label{gal}
Note that for any $W_n(k)$-scheme $T$, one can define a functorial (in $T$ and $G$) isomorphism $\exp_{G,n}\colon p\uLie(G)(T)\longby{\simeq} \ker(G(T)\to G(T_0))$ where $T_0$ is the reduction of $T$ modulo $p$ and $\uLie(G)$ is the Lie  algebra  scheme of $G$ (see \cite[\S 5.2]{ABe}). In particular any map $\exp_{G,n}$ (and thus $\exp_G$) behaves well with respect to finite unramified extension of $W(k)$. Further, for any finite Galois extension $k'/k$ the map $\log_{G,\Q}$ in Definition \ref{log2} can be obtained by descent from the analogous isomorphism over $W(k')$. 
\end{remark}
Let  $u\colon L\to G$ be a morphism of $W(k)$-group schemes where $L = \Z^r$ and $G$ is a smooth and commutative $W(k)$-group scheme with connected fibers. Define $\delta_{u}$ as the $K_0$-linear extension of the composition 
\begin{equation}\label{eq.del}
L(W(k))\longby{u\otimes 1}  G(  W(k))\otimes_\Z \Q \longby{\log_{G,\Q}} \Lie(G)\otimes_{W(k)}K_0 
\end{equation} 
\begin{lemma}\label{d.lbd} Let  $u\colon L\to G$ be a morphism of $W(k)$-group schemes where $L$ is a lattice (\ie isomorphic to $\Z^r$ over some finite unramified extension of $W(k)$) and $G$ is a smooth, connected and commutative $W(k)$-group scheme.   Then there is a unique morphism of $K_0$-vector spaces 
\[\delta_{u}\colon \Lie(L\otimes\G_{a})\otimes_{W(k)}K_0\to \Lie(G)\otimes_{W(k)}K_0 \] 
which is functorial in $u$ and in $W(k)$ and agrees with the one in \eqref{eq.del} for $L$ constant.  Moreover, if $u$ is given by the obvious inclusion $\mathrm{id}\otimes 1 \colon L\to L\otimes \G_{a}$, then $\delta_{u}$ is the identity of $\Lie(L\otimes\G_{a})\otimes_{W(k)}K_0 $.
 \end{lemma}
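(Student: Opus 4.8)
The plan is to define $\delta_u$ by Galois descent from a finite unramified extension of $W(k)$ over which $L$ becomes constant, where formula \eqref{eq.del} applies directly. Since $L$ is a lattice there is a finite extension $k'/k$ such that $L_{W'}$ is constant, where $W'\df W(k')$; as $k$ is finite, $k'/k$ is automatically Galois, and we write $\Delta\df\mathrm{Gal}(k'/k)=\mathrm{Gal}(K_0'/K_0)$ with $K_0'\df\mathrm{Frac}(W')$, noting that $W'/W(k)$ is unramified. Because $L_{W'}$ is constant, \eqref{eq.del} over $W'$ produces a $K_0'$-linear map
\[\delta_{u_{W'}}\colon \Lie(L_{W'}\otimes\G_a)\otimes_{W'}K_0'\longrightarrow \Lie(G_{W'})\otimes_{W'}K_0',\]
namely the $K_0'$-linear extension of $L(W')\longby{u\otimes 1}G(W')\otimes_\Z\Q\longby{\log_{G_{W'},\Q}}\Lie(G_{W'})\otimes_{W'}K_0'$, using the canonical identification $L(W')\otimes_\Z K_0'=\Lie(L_{W'}\otimes\G_a)\otimes_{W'}K_0'$ induced by $\ell\mapsto\ell\otimes 1$.

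The first step is to check that $\delta_{u_{W'}}$ is $\Delta$-equivariant for the natural semilinear $\Delta$-actions on both sides. The map $u\otimes 1$ on $W'$-points is $\Delta$-equivariant because $u$ is defined over $W(k)$, and by the last assertion of Remark \ref{gal} the isomorphism $\log_{G_{W'},\Q}$ is $\Delta$-equivariant (this is exactly what allows one to recover $\log_{G,\Q}$ by descent); hence the composite, and therefore its $K_0'$-linear extension $\delta_{u_{W'}}$, is $\Delta$-equivariant. Since $\Lie$ commutes with base change we have $\Lie(L_{W'}\otimes\G_a)\otimes_{W'}K_0'=\big(\Lie(L\otimes\G_a)\otimes_{W(k)}K_0\big)\otimes_{K_0}K_0'$ and likewise for $G$; by Galois descent along $K_0'/K_0$ the $\Delta$-invariants of $\delta_{u_{W'}}$ form a $K_0$-linear map
\[\delta_u\colon \Lie(L\otimes\G_a)\otimes_{W(k)}K_0\longrightarrow \Lie(G)\otimes_{W(k)}K_0,\qquad \delta_u\otimes_{K_0}K_0'=\delta_{u_{W'}}.\]
This $\delta_u$ does not depend on the choice of $k'$: for $k''\supseteq k'$ the map \eqref{eq.del} over $W'$ base changes to \eqref{eq.del} over $W(k'')$ because $\log_{G,\Q}$ is compatible with finite unramified base change (Remark \ref{gal}), so the descended maps coincide, and two arbitrary splitting fields are compared through a common one. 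If $L$ is already constant over $W(k)$ one may take $k'=k$, so $\delta_u$ is then exactly \eqref{eq.del}. Functoriality in $u$ (compatibility with morphisms of triples $(L,G,u)\to(L_1,G_1,u_1)$) and in $W(k)$ follow by passing to a splitting field common to source and target, where they are immediate from the functoriality of $\log_{G,\Q}$ in $G$ and in $W(k)$, and then descending. Uniqueness is now formal: if $\delta_u'$ satisfies the same properties, functoriality in $W(k)$ for $W(k)\to W'$ gives $\delta_u'\otimes_{K_0}K_0'=\delta_{u_{W'}}=\delta_u\otimes_{K_0}K_0'$, hence $\delta_u'=\delta_u$ because $K_0\to K_0'$ is faithfully flat.

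For the last assertion take $u=\mathrm{id}\otimes 1\colon L\to L\otimes\G_a$, so $G=L\otimes\G_a$, which is a twisted form of $\G_a^{r}$ and thus smooth, connected and commutative. By the descent construction and its independence of the splitting field we may assume $L=\Z^{r}$ is constant; then $L\otimes\G_a=\G_a^{r}$, on $W(k)$-points $u$ is the inclusion $\Z^{r}\into\G_a^{r}(W(k))=W(k)^{r}$, and by Example \ref{ex.ga} (a) the map $\log_{\G_a^{r},\Q}$ is the identity of $K_0^{r}$ under the usual identifications. Hence the composition in \eqref{eq.del} is the inclusion $\Z^{r}\into K_0^{r}=\Lie(\G_a^{r})\otimes_{W(k)}K_0$, whose $K_0$-linear extension is the identity of $\Lie(L\otimes\G_a)\otimes_{W(k)}K_0$, as claimed.

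The main obstacle is the descent bookkeeping: correctly tracking which $\Delta$-actions are $K_0$-linear and which are semilinear, and being sure that the exponential/logarithm isomorphism of Definition \ref{log2} really is $\Delta$-equivariant over $W'$. This last point, however, is supplied verbatim by Remark \ref{gal}, so no input beyond the material already developed is required.
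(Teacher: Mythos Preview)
Your proof is correct and follows essentially the same approach as the paper: the paper's own argument is extremely terse, simply invoking Remark \ref{gal} for the descent and Example \ref{ex.ga}(a) for the last assertion, and your proposal is a careful spelling-out of exactly those two ingredients. The only difference is level of detail---you make explicit the Galois-equivariance check, the independence of the splitting field, and the uniqueness argument via faithful flatness, all of which the paper leaves to the reader.
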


\begin{proof}  It follows by Remark \ref{gal}. The last assertion follows from the fact that, up to the usual identifications,  $\log_{\G_a,\Q}$ in Definition \ref{log2} is the identity  map as computed in Example \ref{ex.ga} (a).
\end{proof}

\begin{remark}\label{r.for}
For any  formal $W(k)$-group scheme $\widehat G=\varprojlim_{n} G_{n}$ where $G_{n}$ is a smooth commutative group scheme of finite type over $W_{n}(k)$,
one can define in a similar way the logarithm  $\log_{\widehat G, \Q}\colon \widehat G(W(k)) \otimes_{\Z } \Q \longby{\simeq} \Lie(\widehat G)\otimes_{\Z_p} \Q_p $ with $\Lie(\widehat G):=\varprojlim_n \Lie(G_{n})$. 
This construction is again functorial, \ie  given a morphism $g=\varprojlim_{n} g_{n}\colon \widehat G\to \widehat H$ then  $\log_{\widehat H,\Q}\circ (g\otimes 1)=(\Lie(g)\otimes \mathrm{id})\circ \log_{\widehat G,\Q}\colon \widehat G(W(k)) \otimes_{\Z} \Q\to \Lie(\widehat H)\otimes_{\Z_p} \Q_p$.
Further, $\log_{G,\Q}$ in Definition \ref{log2} equals $\log_{\widehat G,\Q}$  with $\widehat G$ the $p$-adic completion of $G$. 

 As in Lemma \ref{d.lbd}, given a lattice $L$ over $W(k)$ with base change $L_n$ to $W_n(k)$ and a compatible system of morphisms of $W_n(k)$-group schemes $\{u_n\colon L_n\to G_n\}_{n\in \N}$ over $W_n(k)$ we get a natural morphism of $K_0$-vector spaces $\delta_{u}\colon \Lie(L\otimes\G_{a})\otimes_{W(k)}K_0\to \Lie(\widehat G)\otimes_{W(k)}K_0 $.
\end{remark} 

\subsection{Universal extensions}

Let $\mathcal M_{1 }(S)$ be the category of (Deligne) $1$-motives over a scheme $S$ (\cf  \cite[\S 1.2 \& App. C]{BVK}). If $S=\spec (K)$, let $\MQ$ denote the $\Q$-linear category of  $1$-motives up to {\it isogenies} over $K$, \ie  the category whose objects are \[{\sf M}_{K}=[{\sf u\colon L}_{K}\to {\sf G}_{K}]\] in $\M\df \mathcal M_{1 }(\spec (K))$ and whose morphisms are given by $\Hom_{\M}({\sf M}_{K},{\sf N}_{K})\otimes_\Z \Q$. See \cite[Prop. 1.2.6]{BVK} for a proof that $\MQ$  is an abelian category. A morphism $(h_{-1},h_{0})$ in  $\M$ becomes an isomorphism in $\MQ$ if and only if it is an isogeny, \ie $h_{-1}$ is injective with finite cokernel and $h_{0}$ is an isogeny (see \cite[Lemma 1.2.7]{BVK}). Recall that the canonical weight filtration of $1$-motives yields a weight filtration on $\MQ$ (see \cite[Prop. 14.2.1]{BVK}).
We let
 ${\sf A}_{K}$ be the maximal abelian quotient of ${\sf G}_{K}$ and let \[{\sf M}_{\ab,K}\df[{\sf u}_{{\sf A}_{K}}\colon {\sf L}_{K}\to {\sf A}_{K}]\]
 denote the (Deligne) $1$-motive induced by ${\sf M}_{K}$ via  ${\sf L}_{K}\to {\sf G}_{K}\to {\sf A}_{K}$.

Let $\mathcal M_{0,\Q}$ be the abelian category of Artin motives identified with
the full subcategory of $\MQ$ whose objects are of the type $[{\sf L}_{K}\to 0]$, \ie $1$-motives which are pure of weight zero.

Let $\Mal$ be the larger category of  generalized   $1$-motives with {\it  additive factors} over $K$ whose objects are two terms complexes (in degree -1, 0) ${\sf M}_{K}=[{\sf u}_{K}\colon {\sf L}_{K}\to {\sf E}_{K}]$  where ${\sf L}_{K}$ is a a lattice,  ${\sf E}_{K}$ is a
commutative connected algebraic $K$-group and ${\sf u}_{K}$ is a morphism of algebraic $K$-groups. Note that $\Mal$ is a full subcategory of the category $\Ma$ considered in \cite[\S 1]{BVB}. 

\medskip

Let ${\sf M =[u\colon  L\to  G}]\in \mathcal M_{1 }(S)$ be a $1$-motive over $S$. 
Recall (\eg see \cite[\S 2]{ABV}) that there exists the {\it universal $\G_a$-}extension of ${\sf M}$ which we denote by
 \[{\sf M}^{\natural}\df [{\sf u^{\natural} \colon  L} \to   {\sf G^{\natural}}].\]
 It is an extension of ${\sf M}$ by  a vector group ${\mathbb V}({\sf M})$  such that the homomorphism of push-out
 \begin{equation*}
 \mathrm{Hom}_{ \cO_S}({\mathbb V}({\sf M}),W   )\longrightarrow \Ext({\sf M} ,W)
\end{equation*}
 is an isomorphism for all vector groups $W$ over $S$ (where $\mathrm{Hom}_{\cO_S}(-,- )$ means homomorphisms of vector groups). If $S=\spec\, K$ then ${\sf M}_{K}^{\natural}\in \Mal$ is a $1$-motive with additive factors over $K$    (and $\Ext$ is taken in $\Mal$).
\begin{defn}\label{Tdr} The de Rham realization of ${\sf M}$ is
\[T_{\dr}({\sf M}):=\Lie({\sf G}^{\natural}).  \]
\end{defn}
It is easy   to check that ${\mathbb V}({\sf M})$ has to be the vector group associated to $\mathrm{Ext}({\sf M},\G_a)^\vee$. Furthermore $\V({\sf M})$ is canonically isomorphic to the vector group associated to the sheaf $\omega_{{\sf G}^{\ast} }$ of invariant differentials of the semiabelian scheme ${\sf G}^{\ast} $ Cartier dual of ${\sf M}_\ab$. We have a push-out diagram
 \begin{eqnarray}\label{dia.ue}
\xymatrix{
 0\ar[r]&\V({\sf G})=  \omega_{{\sf A}^{\ast}}\ar[r]\ar@{^{(}->}[d]^{i} &{\sf A}^{\natural}\times_{\sf A} {\sf G} \ar[r]  \ar@{^{(}->}[d] &
{\sf G}\ar[r]\ar@2{-}[d] &0\\
0\ar[r]&\V({\sf M})=\omega_{{\sf G}^{\ast}}  \ar[r]\ar@{->>}[d]^{\bar\tau}& {\sf G}^{\natural}
\ar[r]^\rho\ar@{->>}[d]^\tau &
{\sf G}\ar[r] &0\\
& {\sf L}\otimes \G_a \ar@2{-}[r] & {\sf L}\otimes \G_a  & &
}\end{eqnarray}
with ${\sf A}^{\natural} \df \Pic^{\natural , 0}({\sf A}^{\ast})$ the universal extension of ${\sf A}$. Note that the lifting ${\sf u}^{\natural}$ of ${\sf u}$ composed with $\tau$ gives the map ${\sf L}\to {\sf L}\otimes \G_a, x\mapsto x\otimes 1$; see \cite[\S 2]{Ber} for details.

Any morphism of $1$-motives $\varphi\colon {\sf M} \to {\sf N}$ provides a morphism $\varphi^{\natural}\colon {\sf M}^{\natural}\to {\sf N}^{\natural}$ that maps term by term the elements of the corresponding diagrams \eqref{dia.ue}; it provides a morphism such that the induced morphism $\V({\sf M})\to \V({\sf N})$ corresponds to the pull-back on invariant differentials along the induced morphism  obtained via Cartier duality from ${\sf M}_\ab\to {\sf N}_\ab$.

\begin{remark}\label{r.tau}
Assume $S=\spec( W(k))$, and recall the morphism ${\sf u}^\natural\colon {\sf L}
\to {\sf G}^\natural$ defining ${\sf M}^\natural$. By Lemma \ref{d.lbd} we have a morphism of $K_0$-vector spaces $$\delta_{{\sf u}^\natural}\colon \Lie({\sf L}\otimes\G_{a})\otimes_{W(k)}K_0 \to  \Lie({\sf G}^\natural)\otimes_{W(k)}K_0$$ which is a section of $\Lie(\tau)\otimes \mathrm{id}_{K_0}$ with $\tau$ as in \eqref{dia.ue}. We will show next that these $K_0$-vector spaces are endowed with a structure of an $F$-$K_0$-isocrystal and that $\delta_{{\sf u}^\natural}$ commutes with the Frobenius, see Lemma \ref{section}. \end{remark}

\section{Fullness of the Ogus realization}
\subsection{Models}
By writing  $K=\varinjlim_{n} \cO_{K}[1/n]$ any scheme $X_{K}$ of finite type over $K$ admits a model $X[1/n]$  of finite presentation over $\cO_{K}[1/n]$ for $n$ sufficiently divisible, \ie large enough in the preorder given by  divisibility (see \cite[IV, 8.8.2(ii) p. 28]{ega}). Furthermore this model is essentially unique, \ie  models $X[1/n]$ and $X[1/m]$ of $X_{K}$ become isomorphic on $ \cO_{K}[1/N]$, with $N$ a suitable multiple of $m$ and $n$ (see \cite[IV, 8.8.2.5 p. 32]{ega}).

Any algebraic $K$-group $G_{K}$ thus admits a model $ G$ which is a group scheme of finite presentation over  $S=\spec(\cO_{K}[1/n])$, for $n$ sufficiently divisible, and  $G$ is essentially unique.
Note that   $G$  may be assumed to be smooth, see \cite[IV, Proposition 17.7.8(ii)]{ega}.     
Let $G_{\cO_{K_{v}}}$  denote the base change of $G$ to $\cO_{K_{v}}$, when it makes sense, and let $G_{k_{v}}$ be its special fiber.

A morphism of algebraic $K$-groups $f_{K}\colon G_{K}\to G_{K}^{\prime}$  extends to an $S$-morphism of {\it schemes} between the models (see  \cite[IV, 8.8.1.1 p. 28]{ega}). Up to inverting finitely many primes, one can assume that this is indeed a morphism of $S$-group schemes.

Let $\mathbf{GCC}(K)$ be the category of commutative connected algebraic $K$-groups and $\mathbf{GCC}(K)_\Q$ its localization at the class of isogenies.
One can define a functor

\begin{equation}\label{bf}
\bLie\colon \mathbf{GCC}(K)\to \mathbf{BOg}(K)
\end{equation} 
which associates to any commutative connected algebraic $K$-group $G_{K}$ the object in  $\mathbf{BOg}(K)$  (see \S \ref{s.bog}) represented by the triple $(V,L,({}^{\flat}F_{v})_{v\in \cP_{n}})$ in $\mathcal L_{\cP_{n}}$ defined as follows. Set $V\df \Lie(G_{K})$, 
$L\df\Lie(G)$  and ${}^{\flat}F_{v}$ on $\Lie(G_{k_{v}})\cong L\otimes_{\cO_{K}} k_{v}$ the canonical $\sigma_{v}$-semilinear homomorphism described in \eqref{pth} and induced by the Verschiebung $\mathrm{Ver}_{G_{k_{v}}}$ (see Lemma \ref{l.pth}). 

Note that $L$ is a $\cO_{K}[1/n]$-lattice in $\Lie (G_K)$ via the isomorphism  $L\otimes_{\cO_{K}} K\cong \Lie(G_{K})$ and that $\Lie(G_{k_{v}})\cong L\otimes_{\cO_{K}} k_{v}$ is the canonical isomorphism (see \cite[Exp. ${\rm II}$, Prop. 3.4 and \S 3.9.0]{sga3}).

We have:

\begin{thm}[Bost] \label{bost} The functor $\bLie\colon \mathbf{GCC}(K)_{\Q}\to \mathbf{BOg}(K)$  is fully faithful.
\end{thm}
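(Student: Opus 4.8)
The plan is to reduce the statement to the case treated in Bost's original work, namely the case of abelian varieties (or more precisely, of commutative algebraic groups of the shape $\G_a^a\times\G_m^b\times A$ up to isogeny), since the functor $\bLie$ here is essentially the ``Frobenius-enriched'' Lie-algebra functor whose full faithfulness is the content of \cite[2.3.2]{Bo} / \cite[\S 7.4.2]{An}.

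\medskip

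\textbf{Step 1: Faithfulness.} First I would observe that faithfulness is easy and formal. A morphism $f_K\colon G_K\to G'_K$ in $\mathbf{GCC}(K)_\Q$ with $\bLie(f_K)=0$ has $\Lie(f_K)=0$ as a $K$-linear map; since $\Lie(-)$ is already faithful on $\mathbf{GCC}(K)_\Q$ (a connected commutative algebraic group in characteristic $0$ is determined up to isogeny by its Lie algebra together with its lattice data — more precisely, $\Lie(f_K)=0$ forces $f_K$ to be zero on the identity component, hence zero up to isogeny), we conclude $f_K=0$. So faithfulness requires nothing beyond the characteristic-zero structure theory.

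\medskip

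\textbf{Step 2: Fullness, reduction to the structural pieces.} The real content is fullness. Given $G_K,G'_K\in\mathbf{GCC}(K)$ and a morphism $\phi\colon \bLie(G_K)\to\bLie(G'_K)$ in $\mathbf{BOg}(K)$, I must produce an isogeny-class of morphism $G_K\to G'_K$ inducing it. By the structure theory of commutative connected algebraic groups over a field of characteristic $0$, up to isogeny every such $G_K$ is (uniquely) an extension, and after passing to $\mathbf{GCC}(K)_\Q$ one reduces, using the weight/slope decomposition provided by Lemma~\ref{wn} applied to the isocrystals $\bLie(G_{k_v})$ (the additive part has Verschiebung $0$, i.e. slope/weight forcing $\mathrm{Ver}=0$; the toric part $\G_m$ has $p$th power operation $x\mapsto x^p$, an isomorphism; the abelian part is pure of the appropriate weight by Example~\ref{ex.gagm}(b)), to matching $\phi$ block by block: $\G_a$-part to $\G_a$-part, $\G_m$-part to $\G_m$-part, abelian part to abelian part. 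The additive and multiplicative blocks are elementary: $\Hom_{\mathbf{GCC}(K)_\Q}(\G_a^a,\G_a^{a'})=M_{a'\times a}(K)$ and the compatibility with the ($=0$ resp. $x\mapsto x^p$) Frobenius is automatic, while $\Hom(\G_m^b,\G_m^{b'})\otimes\Q=M_{b'\times b}(\Q)$ and again compatibility is automatic; cross terms $\Hom(\G_a,\G_m)$ etc. vanish up to isogeny and the corresponding Frobenius-compatibility kills the remaining off-diagonal blocks of $\phi$. Thus fullness is reduced to the statement for abelian varieties: $\Hom_{\mathbf{AbVar}(K)}(A_K,A'_K)\otimes\Q\to\Hom_{\mathbf{BOg}(K)}(\bLie(A_K),\bLie(A'_K))$ is surjective.

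\medskip

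\textbf{Step 3: The abelian variety case via Bost's algebraicity theorem.} This is the heart, and where I would simply invoke Bost's theorem as presented by Andr\'e \cite[\S 7.4.2]{An}. The point is: by Example~\ref{ex.gagm}(b), $\bLie(A_K)$ corresponds to $\mathrm H^1(A_K^*,\cO)$ with its reduction-mod-$\p_v$ Frobenius, so a morphism $\phi$ in $\mathbf{BOg}(K)$ between the $\bLie$'s is exactly a ``$\mathbf{Frob}_{ae}$-equivariant'' linear map between these coherent cohomology groups. One forms the graph: a $K$-subspace $\Gamma_\phi\subset \Lie(A_K\times A'_K)$ (or the relevant $\mathrm H^1$ of the dual) which, by the $\mathbf{Frob}_{ae}$-compatibility, has a model over $\cO_K[1/n]$ stable under Verschiebung/Frobenius modulo $\p_v$ for almost all $v$ — i.e. it is a ``$p$-curvature-zero for almost all $p$'' sub-bundle in the sense of the Grothendieck–Katz setting, or rather satisfies the hypotheses of Bost's algebraicity criterion for the formal leaf of the corresponding foliation. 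Bost's theorem then says this formal subgroup is algebraic, hence is the Lie algebra of an abelian subvariety $B\subset A_K\times A'_K$ (up to isogeny), and — after checking the projection $B\to A_K$ is an isogeny, which follows because $\phi$ is a genuine map of vector spaces, not a correspondence with kernel — $B$ is the graph of the desired morphism $A_K\to A'_K$ inducing $\phi$. I expect \textbf{the main obstacle} to be purely expository: making precise the dictionary between ``morphism in $\mathbf{BOg}(K)$'' (Frobenius-mod-$\p_v$ compatibility on a coherent sheaf, via the $p$th power operation of \eqref{pth} and Lemma~\ref{l.pth}) and the hypothesis actually used in Bost's algebraicity theorem, so that the citation to \cite[\S 7.4.2]{An} / \cite[2.3.2]{Bo} is legitimate; the geometry (graph argument, projection is an isogeny, reassembly of the three blocks into a single morphism $G_K\to G'_K$ up to isogeny) is then routine.
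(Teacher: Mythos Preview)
The paper's own proof is a bare citation: fullness is exactly Bost's result \cite[Thm.~2.3 \& Cor.~2.6]{Bo} (and \cite{C-L}) for \emph{arbitrary} commutative connected algebraic groups, and faithfulness is the one-liner $\Lie(f_K)=0\Rightarrow f_K=0$ in characteristic~$0$. Your Step~1 and Step~3 are in line with this. The problem is Step~2.

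Your reduction to ``blocks'' $\G_a^a\times\G_m^b\times A$ does not work as written, for three reasons.
First, objects of $\mathbf{GCC}(K)_\Q$ are \emph{not} products of this shape up to isogeny: a non-split semi-abelian variety, or a nontrivial vector extension of an abelian variety, is not isogenous to a product, so there is no block decomposition of $G_K$ to appeal to---only a filtration.
Second, your invocation of Lemma~\ref{wn} is a category error: that lemma concerns $F$-$K_v$-isocrystals (finite-dimensional $K_v$-vector spaces with a $K_v$-linear endomorphism having Weil-number eigenvalues), whereas $\mathbf{BOg}(K)$ only records the mod-$\p_v$ data $(\Lie(G_{k_v}),{}^{[p]})$, a $k_v$-vector space with a $\sigma_v$-semilinear map; there is no ``weight/slope decomposition'' available at this level.
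Third, and most seriously, the claim that Frobenius-compatibility kills the off-diagonal blocks of $\phi$ is not elementary. Consider a putative $\phi\colon\Lie(\G_{a,K})\to\Lie(A_K)$ in $\mathbf{BOg}(K)$: compatibility with ${}^{[p]}$ forces the image of $\phi_{k_v}$ to lie in $\ker({}^{[p]})\subset\Lie(A_{k_v})$, but this kernel is nonzero whenever $A_{k_v}$ is non-ordinary. Showing that a nonzero $K$-line in $\Lie(A_K)$ cannot reduce into $\ker({}^{[p]})$ for almost all $v$ is precisely a special case of Bost's theorem (applied to the graph inside $\G_a\times A$), so your ``reduction'' is circular.

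The fix is simply to drop Step~2: the graph argument you sketch in Step~3 already applies verbatim with $A_K,A'_K$ replaced by arbitrary $G_K,G'_K\in\mathbf{GCC}(K)$, and that is exactly what Bost proves in \cite[Cor.~2.6]{Bo}. The restriction to abelian varieties in \cite[\S 7.4.2]{An} is an artifact of Andr\'e's expository context (pure motives), not of Bost's theorem.
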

\begin{proof}
The difficult part in the proof is the fullness. See \cite[Thm.~2.3 \& Cor.~2.6]{Bo} and \cite{C-L} for details.
On the other hand, the proof of the faithfulness is immediate. Let $f_{K}\colon G_{K}\to G^{\prime}_{K}$ be a morphism of (commutative connected) algebraic $K$-groups and let $f\colon G\to G^{\prime}$ be a morphism of $\cO_{K}[1/n]$-group schemes which extends it, for $n$  sufficiently divisible. If $\Lie(f_{K})=0$ then $f_{K}=0$, see \cite[II, \S 6, n. 2, Prop. 2.1 (b)]{DG}.
\end{proof}

Now consider a $1$-motive ${\sf M}_{K}$ over $K$ and models ${\sf M}$ over  $S=\spec(\cO_{K}[1/n])$, \ie   $1$-motives ${\sf M}$ over $S$ such that the base change to $K$ is isomorphic to ${\sf M}_{K}$, for $n$ sufficiently divisible. We have:

\begin{lemma}\label{1mod} Any $1$-motive   ${\sf M}_{K}=[{\sf u}_{K}\colon {\sf L}_{K}\to {\sf G}_{K}]$ over $K$  admits a model ${\sf M}=[{\sf u}\colon {\sf L}\to {\sf G}]$ over an  $S=\spec(\cO_{K}[1/n])$  for $n$ sufficiently divisible. This model is essentially unique, \ie any two such models are isomorphic over a $ \spec(\cO_{K}[1/m])$ with $n|m$. 
\end{lemma}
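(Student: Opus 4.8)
The plan is to present ${\sf M}_{K}$ through its lattice ${\sf L}_{K}$, its torus part ${\sf T}_{K}$, its maximal abelian quotient ${\sf A}_{K}$, the extension $0\to {\sf T}_{K}\to {\sf G}_{K}\to {\sf A}_{K}\to 0$ and the homomorphism ${\sf u}_{K}$, and then to spread out each of these ingredients over some $\cO_{K}[1/n]$, $n$ sufficiently divisible, using repeatedly the facts recalled at the beginning of this subsection (ultimately the limit formalism of \cite[IV, \S 8 \& \S 17]{ega}).

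First I would spread out the lattice. As ${\sf L}_{K}$ becomes isomorphic to the constant lattice $\Z^{r}$ after a finite separable base change $K'/K$, one spreads out $\spec(K')\to\spec(K)$ to a finite \'etale cover $\spec(\cO_{K'}[1/n])\to\spec(\cO_{K}[1/n])$, over which $\Z^{r}$ is a model of ${\sf L}_{K}\otimes_{K}K'$; the (locally constant) descent datum defining ${\sf L}_{K}$ spreads out as well, so that after enlarging $n$ one obtains a lattice ${\sf L}$ over $\cO_{K}[1/n]$ — a finite \'etale group scheme which is a twisted form of $\Z^{r}$ — with generic fiber ${\sf L}_{K}$. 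The identical argument applied to the cocharacter lattice of ${\sf T}_{K}$ produces a torus ${\sf T}$ over $\cO_{K}[1/n]$ with generic fiber ${\sf T}_{K}$.

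For the abelian part, the ``Models'' paragraph yields a smooth group scheme model ${\sf A}$ of ${\sf A}_{K}$ over $\cO_{K}[1/n]$; since, after enlarging $n$, the structure morphism ${\sf A}\to\spec(\cO_{K}[1/n])$ becomes proper with geometrically connected fibers and the axioms of a group scheme are satisfied, ${\sf A}$ is then an abelian scheme with generic fiber ${\sf A}_{K}$. In the same way one obtains a smooth commutative group scheme ${\sf G}$ over $\cO_{K}[1/n]$ together with homomorphisms ${\sf T}\to {\sf G}$ and ${\sf G}\to {\sf A}$ extending ${\sf T}_{K}\to {\sf G}_{K}\to {\sf A}_{K}$; the exactness of $0\to {\sf T}\to {\sf G}\to {\sf A}\to 0$ — ${\sf T}\to {\sf G}$ a closed immersion, ${\sf G}\to {\sf A}$ faithfully flat with kernel ${\sf T}$ — again holds after enlarging $n$, so that ${\sf G}$ is semiabelian over $\cO_{K}[1/n]$. (Alternatively one spreads out the class of ${\sf G}_{K}$ in $\Ext^{1}({\sf A}_{K},{\sf T}_{K})$ directly: by Barsotti--Weil it corresponds to a $K$-point of an abelian variety, which extends to an integral point after inverting finitely many primes by properness.) Finally ${\sf u}_{K}$ extends, after enlarging $n$, to a homomorphism of group schemes ${\sf u}\colon {\sf L}\to {\sf G}$, and ${\sf M}\df [{\sf u}\colon {\sf L}\to {\sf G}]$ is a $1$-motive over $\cO_{K}[1/n]$ with generic fiber ${\sf M}_{K}$, which proves existence.

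Essential uniqueness follows the same pattern: given models ${\sf M}$ over $\cO_{K}[1/n]$ and ${\sf M}'$ over $\cO_{K}[1/n']$, the identity of ${\sf M}_{K}$ is a morphism of $1$-motives over $K$ whose two components, being morphisms of group schemes, extend over $\cO_{K}[1/m]$ (for $m$ a suitable common multiple of $n$ and $n'$) to a morphism ${\sf M}_{\cO_{K}[1/m]}\to {\sf M}'_{\cO_{K}[1/m]}$ restricting to the identity generically; since the locus where a morphism of finite-presentation group schemes is an isomorphism is open and contains the generic point, after enlarging $m$ this is an isomorphism of the two models over $\cO_{K}[1/m]$. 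The one step that is not pure limit formalism is making sure that the semiabelian datum of ${\sf G}_{K}$ spreads out — that the abelian quotient extends to a genuine abelian scheme and that the extension extends compatibly — and this is where I would expect the argument to need the most care.
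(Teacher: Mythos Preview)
Your proof is correct and proceeds by a somewhat different route than the paper's. The paper builds the model ${\sf G}$ using N\'eron models: ${\sf A}$ is taken to be the (global) N\'eron model of ${\sf A}_{K}$ over $S$, and ${\sf G}$ is the identity component of the lft-N\'eron model $\mathcal{G}$ of ${\sf G}_{K}$ (citing \cite[\S 10.1]{BLR}). The semiabelian structure on ${\sf G}$ is then obtained via Cartier duality: ${\sf G}_{K}$ is dual to the $1$-motive $[{\sf Y}_{K}\to {\sf A}_{K}^{\ast}]$, which extends to $S$ by the N\'eron mapping property, and by essential uniqueness ${\sf G}$ is the dual of this extension, hence an extension of ${\sf A}$ by ${\sf T}$. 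The map ${\sf u}_{K}$ extends to $\mathcal{G}$ by the N\'eron property, and one then checks it factors through $\mathcal{G}^{0}={\sf G}$ after inverting finitely many primes.

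You instead run the limit formalism of \cite[IV, \S 8, \S 17]{ega} uniformly on every ingredient, spreading out ${\sf G}_{K}$ directly as a smooth commutative group scheme and then spreading out the short exact sequence $0\to {\sf T}\to {\sf G}\to {\sf A}\to 0$ (closed immersion, faithful flatness, identification of the kernel all descend to a cofinite open of $\spec(\cO_{K})$), and finally ${\sf u}$. This avoids N\'eron models altogether and is more elementary, at the cost of having to verify by hand that the semiabelian datum spreads out --- exactly the point you flag as needing care. The paper's approach buys a canonical choice of model and a clean one-line extension of ${\sf u}$ via the N\'eron property; yours shows that the lemma is really a formal consequence of EGA and needs no input from \cite{BLR}. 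Your Barsotti--Weil alternative for spreading out the extension class is in the same spirit as the paper's Cartier-duality step.
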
 
\begin{proof}
Let ${\sf T}_{K}$  be the maximal torus in ${\sf G}_{K}$ and let ${\sf A}_{K}$ be the maximal abelian quotient of ${\sf G}_{K}$. Let $\sf T$ be the model of ${\sf T}_{K}$ over $S$.  We may assume that it is a torus. Indeed ${\sf T}_{K}$  becomes split over a  finite Galois extension $K'$ of $K$ and, up to enlarging $n$, we may assume that $S'=\spec (\cO_{K'}[1/n])$ is \'etale over $S = \spec(\cO_{K}[1/n])$. Now,  $\sf T$ is a torus since its base change to $S'$ is a split torus  by the essential unicity of the model. Further, by Cartier duality, the group of characters of any torus over $K$ admits a model over an $S$ which is \'etale locally a split lattice of fixed rank. In particular ${\sf L}_{K} $ extends to such a model $\sf L$ over $S$. On the other side, we may assume that the model $\sf A$ of ${\sf A}_{K}$ is   an abelian scheme over $S$ \cite[\S 1.2, Thm. 3]{BLR}, hence $\sf A$ is the global N\'eron model of  ${\sf A}_{K}$ over  $S$. Finally, by  \cite[\S 10.1, Prop. 4 \& 7]{BLR} ${\sf G}_{K}$ admits a global N\'eron lft-model $\mathcal G$ over $S$ and the identity component $\sf G:=\mathcal G^{0}$ is a smooth  model of $\sf G_{K}$  of finite type over $S$. 

Recall that $\sf G_{K}$ is the Cartier dual of the $1$-motive  ${\sf M}_{\ab,K}^\ast=[{\sf u}_{{\sf A}_{K}}^\ast \colon {\sf Y}_{K}\to {\sf A}_{K}^* ]$ where ${\sf Y}_{K}$ is the group of characters of ${\sf T}_{K}$. By the above discussion on models of abelian varieties and lattices and by the universal property of N\'eron models \cite[\S 1.2, Definition 1]{BLR}, the $1$-motive  ${\sf M}_{\ab,K}^\ast$ extends uniquely to a $1$-motive ${\sf M}_{\ab}^\ast=[ {\sf u}_{{\sf A}}^\ast\colon {\sf Y}\to {\sf A}^\ast]$ over $S$  and, by the essential unicity of the model of $\sf G_{K}$, $\sf G$ is the Cartier dual of ${\sf M}_{\ab}^\ast$, in particular it is extension of the abelian scheme $\sf A$ by the torus $\sf T$.

Clearly ${\sf u}_{K}$ extends uniquely to a morphism ${\sf u}\colon {\sf L}\to \mathcal G$. It remains to check that up to enlarging $n$, $ \sf u$  factors through the open subscheme ${\sf G}=\mathcal G^{0}$ of $\mathcal G$. Since the formation of lft N\'eron models is compatible with \'etale base change we may assume that $\sf T$ is a split torus over $S$ and ${\sf L}\simeq \Z^r$.
Fix a basis of $\Z^r$ and let $e_i\colon S\to \Z^r,i=1,\dots, r,$ denote the corresponding morphisms. It is sufficient to check that up to enlarging $n$ each map ${\sf u}\circ e_i$ (which corresponds to a $K$-rational point of ${\sf G}_{K}$) factors through $\sf G$. This follows from  \cite[IV, 8.8.1.1]{ega}. 
\end{proof}

\subsection{The Ogus realization}
For a $1$-motive   ${\sf M}_{K}=[{\sf u}_{K}\colon {\sf L}_{K}\to {\sf G}_{K}]$ over $K$ consider its model ${\sf M}=[{\sf u}\colon {\sf L}\to {\sf G}]$ over a  $S=\spec(\cO_{K}[1/n])$ given in Lemma \ref{1mod}.  For any $v\in \cP_{n}$ let ${\sf M}_{\cO_{K_{v}}}$ be the base change to $\cO_{K_{v}}$ of the model  ${\sf M} $ and let ${\sf M}_{k_{v}}$ be the reduction of ${\sf M}_{\cO_{K_{v}}}$ modulo $\p_{v}$.

\begin{defn}\label{TOg}
Let $V\df T_{\dr}({\sf M}_{K})$ be the de Rham realization (see Definition \ref{Tdr}) as a $K$-vector space. Let $V_{v}\df T_{\dr} ({\sf M}_{\cO_{K_{v}}} )\otimes_{\cO_{K_{v}}} K_{v}$ together with the induced isomorphism obtained by $g_{v}\colon V\otimes_{K} K_{v} \to V_{v}$ and compatibility of $T_{\dr}$ with base change. Consider a model ${\sf M}$ over  $S=\spec(\cO_{K}[1/n])$ as in Lemma \ref{1mod}.  For every $v\in \cP_{n}$ consider the canonical isomorphism $T_{\dr}({\sf M}_{\cO_{K_{v}}})\simeq T_\cris({\sf M}_{k_{v}} )$ as stated  in \cite[Cor. 4.2.1]{ABV}. Via this identification $T_{\dr}({\sf M}_{\cO_{K_{v}}})$ is endowed with a $\sigma_{v}^{-1}$-semilinear endomorphism $\Phi_{v}$ (Verschiebung). Let $F_{v}$ be the $\sigma_{v}$-semilinear endomorphism
$(\Phi_{v}\otimes \mathrm{id}_{K_{v}})^{-1}$ on  $V_{v}$.  In this way, we have associated to ${\sf M}_{K}$ an object $\cV\df \left(V, (V_{v}, F_{v})_{v\in \cP_{n}}, (g_{v})_{v\in \cP_{n}}\right)$
 in $\mathbf{Og}(K)$.
The usual weight filtration on $1$-motives induces
a weight filtration  $W_{\d}\cV$ on $\cV$ in $\mathbf{Og}(K)$. We shall denote \[T_{\og}({\sf M}_{K})\df (\cV, W_{\d}\cV). \]
\end{defn}

Note that  $\Phi_{v}$ is, in general, not invertible on $T_{\dr}({\sf M}_{\cO_{K_{v}}})$ and this $F_{v}$ is the Frobenius of  \cite[\S 4.1]{ABV} divided by $p_v$ (see \cite[\S 4.3 (4.b)]{ABV}), \ie $p_vF_{v}$ is the map associated to the Verschiebung ${\sf M}_{\cO_{K_{v}}}^{(p_v)}\to {\sf M}_{\cO_{K_{v}}}$.
 This choice is made so that the weight filtration on 1-motives and on the isocrystals are compatible.  We have:
\begin{lemma}\label{Tog}
$T_{\og}({\sf M}_{K})\in\mathbf{FOg}(K)$.
\end{lemma}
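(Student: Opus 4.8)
The plan is to reduce the purity requirement to the three pure graded quotients of the weight filtration of ${\sf M}_K$ and then to compute $F_v^{n_v}$ on each of them.

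First I would recall that the weight filtration of ${\sf M}_K$ --- $W_{-2}{\sf M}_K=[0\to {\sf T}_K]$ the toric part, $W_{-1}{\sf M}_K=[0\to {\sf G}_K]$ the semiabelian part, $W_0{\sf M}_K={\sf M}_K$ --- is a filtration by sub-$1$-motives with graded pieces a torus, an abelian variety and a lattice, and, by the analysis of models in the proof of Lemma \ref{1mod}, after enlarging $n$ it extends to a filtration $W_\d {\sf M}$ of the chosen model over $S=\spec(\cO_K[1/n])$ by sub-$1$-motives with graded pieces $[0\to {\sf T}]$, $[0\to {\sf A}]$, $[{\sf L}\to 0]$, compatibly with base change to $\cO_{K_v}$ and $k_v$. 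Since $T_{\dr}$ is exact (\cf \cite{BVK}, \cite{ABV}) and the comparison $T_{\dr}({\sf M}_{\cO_{K_v}})\simeq T_\cris({\sf M}_{k_v})$ together with its Frobenius is functorial in ${\sf M}$, applying $T_{\dr}$ to $W_\d {\sf M}$ yields $F_v$-stable $K_v$-subspaces $T_{\dr}(W_i{\sf M}_{\cO_{K_v}})\otimes_{\cO_{K_v}}K_v\subseteq V_v$ which, under $g_v$, coincide with $W_iV\otimes_K K_v$ where $W_iV\df T_{\dr}(W_i{\sf M}_K)$. Thus $W_\d\cV$ is an increasing finite exhaustive filtration of $\cV$ by subobjects of $\mathbf{Og}(K)$ and, by exactness, $\gr_i^W\cV$ is the object of $\mathbf{Og}(K)$ underlying $T_{\og}(\gr_i^W{\sf M}_K)$. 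It remains to prove that $T_{\og}([0\to{\sf T}_K])$, $T_{\og}([0\to{\sf A}_K])$, $T_{\og}([{\sf L}_K\to 0])$ are pure of weights $-2$, $-1$, $0$ respectively.

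For this, write $\mathcal F_v\df p_vF_v$; by Definition \ref{TOg} (\cf \cite[\S 4.3 (4.b)]{ABV}) this is the endomorphism of $T_\cris({\sf M}_{k_v})$ induced by the Verschiebung ${\sf M}_{k_v}^{(p_v)}\to {\sf M}_{k_v}$, so $F_v^{n_v}=\mathcal F_v^{n_v}/p_v^{n_v}$ is $K_v$-linear and it suffices to see that on the pure graded piece $\gr_i^W$ the eigenvalues of $\mathcal F_v^{n_v}$ are Weil numbers of weight $i+2$ relative to $k_v$. For $i=-2$: the universal extension of $[0\to{\sf T}_{k_v}]$ is itself ($\V([0\to{\sf T}_{k_v}])=0$, as its maximal abelian quotient is $0$), so $T_\cris([0\to{\sf T}_{k_v}])=\Lie({\sf T}_{k_v})$, and by Lemma \ref{l.pth} and Example \ref{ex.gagm}(a) the Verschiebung induces on $\Lie(\G_{m,k_v})$ the $p$th power operation $x\mapsto x^p$, which in the canonical trivialization is the Frobenius of $k_v$; descending from a split torus over a finite unramified extension (Remark \ref{gal}), $\mathcal F_v^{n_v}$ has all eigenvalues roots of unity --- Weil numbers of weight $0$ --- so $F_v^{n_v}$ has eigenvalues of absolute value $p_v^{-n_v}$. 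For $i=0$: $T_\cris([{\sf L}_{k_v}\to0])=\Lie({\sf L}_{k_v}\otimes\G_a)$ and the Verschiebung of the étale group ${\sf L}_{k_v}$ is multiplication by $p_v$, so $\mathcal F_v^{n_v}$ is multiplication by $p_v^{n_v}$, a Weil number of weight $2$, and $F_v^{n_v}$ has eigenvalues of absolute value $1$. For $i=-1$: $T_\cris([0\to{\sf A}_{k_v}])=H^1_\cris({\sf A}_{k_v}^\ast)$ and $\mathcal F_v^{n_v}$ is an $n_v$-th iterate of the crystalline Frobenius, whose characteristic polynomial is that of the $|k_v|$-power Frobenius endomorphism of the abelian variety ${\sf A}_{k_v}^\ast$; here I would invoke the Riemann hypothesis for abelian varieties over finite fields (Weil; the crystalline version is standard, \cf Katz--Messing) to conclude that its roots are Weil numbers of weight $1$, so that $F_v^{n_v}$ has eigenvalues of absolute value $p_v^{-n_v/2}$. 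Combining with the first paragraph, $(\cV,W_\d\cV)$ is an object of $\mathbf{FOg}(K)$; in particular $(V_v,F_v^{n_v})$ is a mixed $F$-$K_v$-isocrystal of weights in $\{-2,-1,0\}$ for all $v\in\cP_n$.

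The delicate point is the matching of normalizations: checking that the $F_v$ of Definition \ref{TOg} is exactly $p_v^{-1}$ times the Verschiebung-induced Frobenius on each pure graded quotient --- including the Frobenius-twist and semilinearity bookkeeping hidden in $T_\cris({\sf M}^{(p_v)}_{k_v})\simeq\sigma_v^\ast T_\cris({\sf M}_{k_v})$ --- so that the integral weights come out as $-2,-1,0$ and not their Tate twists. The only genuinely non-formal input is the purity of the crystalline $H^1$ of an abelian variety over a finite field; everything else is bookkeeping with the exactness and functoriality of $T_{\dr}$ and the $p$-power/Verschiebung computations of Section 2.
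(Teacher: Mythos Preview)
Your proof is correct and follows essentially the same approach as the paper's: reduce to the three pure graded pieces of the weight filtration and verify purity on each, invoking Katz--Messing for the abelian variety case. The only difference is that the paper cites \cite[\S 4.1]{ABV} directly for the explicit $F_v$ on the torus and lattice parts, whereas you recompute them via Lemma~\ref{l.pth} and Example~\ref{ex.gagm}; just note that those statements live over $k_v$, so your identifications $T_\cris([0\to{\sf T}_{k_v}])=\Lie({\sf T}_{k_v})$ and $T_\cris([{\sf L}_{k_v}\to 0])=\Lie({\sf L}_{k_v}\otimes\G_a)$ should be read at the $\cO_{K_v}$-level (i.e.\ as $\Lie({\sf T}_{\cO_{K_v}})$, $\Lie({\sf L}_{\cO_{K_v}}\otimes\G_a)$), and the passage from the special-fiber Verschiebung to its action on the crystalline module is precisely the content of \cite[\S 4]{ABV} that you are implicitly using.
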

\begin{proof} In fact  $W_{0}\cV/W_{-1}\cV= T_{\og}([{\sf L}_{K}\to 0])$ so that the underlying $K$-vector space is $T_\dr ([{\sf L}_{K}\to 0])={\sf L}_{K}\otimes K$ and similarly for $W_{-2}\cV=T_{\og}([0\to {\sf T}_{K}])$  with underlying vector space $Y_{K}\otimes K$ where $Y_{K}$ the cocharacter group of the torus ${\sf T}_{K}$.   For  ${\sf T}$  the  $S$-torus which is a model of ${\sf T}_{K}$ let $Y$ be the group of cocharacters of $\sf T$. 
According to \cite[\S4.1]{ABV} for almost all unramified places $v$ the $\sigma_{v}$-semilinear homomorphism $F_{v}$ on $T_\cris([{\sf L}_{k_{v}}\to 0])={\sf L}\otimes W(k_{v})$ (respectively on $T_\cris([0\to  {\sf T}_{k_{v}}]))=Y\otimes W(k_{v})$) is $1\otimes \sigma_{v}$ (respectively the map $1 \mapsto  1\otimes p_{v}^{-1} \sigma_{v}$). 
Hence  $W_{0}\cV/W_{-1}\cV$ and $W_{-2}\cV$  are pure of weight $0$ and $-2$
respectively (in the sense of \S \ref{sec:mixedOgus}).

On the other hand $W_{-1}\cV/W_{-2}\cV= T_{\og}({\sf A}_{K}) $ and      $\sf A$ is a model over $S$ of the abelian variety  ${\sf A}_{K}$. Thanks to \cite[Thm.  A, p. 111]{ABV} for every unramified place $v$ of good reduction for ${\sf A}_{K}$ we can identify $T_\cris([0\to {\sf A}_{k_{v}}])$ and the $\sigma_{v}$-semilinear homomorphism $F_{v}$ with the covariant Dieudonn\'e module or equivalently with the crystalline homology of
the reduction ${\sf A}_{k_{v}}$ of ${\sf A}$. The latter defines a pure $F$-$K_{v}$-isocrystal of weight $-1$ thanks to a homological version of \cite[Cor. 1 (2)]{KM}.
\end{proof}

Recall that a functor between abelian categories with a weight filtration respects the splittings (in the sense of \cite[Def. D.2.2]{BVK}) if it takes pure objects to pure objects of the same weight.
\begin{propose} \label{OGreal}
There is a functor
\[T_{\og}\colon \MQ \to \mathbf{FOg}(K)\]
which associates  to a  $1$-motive ${\sf M}_{K}$  the object  $T_{\og}({\sf M}_{K})$ in $\mathbf{FOg}(K)$ provided by  Lemma \ref{Tog}. This functor respects the splittings and its essential image is contained in $\mathbf{FOg}(K)_{(1)}$ (see Definition \ref{fog1}).\end{propose}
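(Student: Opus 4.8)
The plan is to establish the three assertions of Proposition \ref{OGreal} in turn: (i) that $T_{\og}$ is a well-defined functor, (ii) that it respects splittings, and (iii) that its essential image lands in $\mathbf{FOg}(K)_{(1)}$. For (i), the object $T_{\og}({\sf M}_K)$ is already produced by Lemma \ref{Tog}, so the main point is independence of the chosen model ${\sf M}$ over some $S=\spec(\cO_K[1/n])$ and functoriality in ${\sf M}_K$. Here I would invoke the essential uniqueness of models (Lemma \ref{1mod}): two models agree over some $\cO_K[1/m]$, hence induce the same data $(V_v,F_v,g_v)$ for all $v\in\cP_m$, which by the description of $\mathbf{Og}(K)$ as a $2$-colimit (Remark \ref{rem.pn}) means they represent the same object. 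For functoriality, a morphism $\varphi_K\colon {\sf M}_K\to {\sf N}_K$ of $1$-motives extends, after enlarging $n$, to a morphism of models over $S$ (by the cited EGA results and again Lemma \ref{1mod}); its de Rham realization $T_{\dr}(\varphi)=\Lie(\varphi^\natural)$ is compatible with base change to $\cO_{K_v}$ and, via the crystalline comparison isomorphism $T_{\dr}({\sf M}_{\cO_{K_v}})\simeq T_{\cris}({\sf M}_{k_v})$ of \cite[Cor.~4.2.1]{ABV}, commutes with $\Phi_v$, hence with $F_v=(\Phi_v\otimes\mathrm{id})^{-1}$, for all $v\in\cP_n$. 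Passing from $\M$ to $\MQ$ is automatic since the target is $\Q$-linear. Compatibility with the weight filtration is built into the definition of $W_\bullet\cV$.

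For (ii), ``respects the splittings'' means by definition that $T_{\og}$ sends pure $1$-motives to pure objects of the same weight. A pure $1$-motive in $\MQ$ is isogenous to one of the three types $[{\sf L}_K\to 0]$, $[0\to {\sf A}_K]$, $[0\to {\sf T}_K]$; the proof of Lemma \ref{Tog} already computes $T_{\og}$ of each of these and shows they are pure of weights $0$, $-1$, $-2$ respectively. Since $T_{\og}$ is a functor compatible with the weight filtration and these computations, it maps each graded piece $\gr^W_i{\sf M}_K$ to a pure object of weight $i$, which is exactly the splitting-preserving property; one should also remark that it commutes with the weight truncation functors, which again follows from functoriality applied to the canonical maps in the weight filtration of $1$-motives.

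For (iii), the content is to verify the defining conditions of $\mathbf{FOg}(K)_{(1)}$ (Definition \ref{fog1}): the object $(\cV,W_\bullet\cV)=T_{\og}({\sf M}_K)$ has weights in $\{-2,-1,0\}$ (immediate from the weight filtration of $1$-motives), is $e$-effective, $W_{-2}\cV$ is an Artin--Lefschetz object, and $(\cV,W_\bullet\cV)(-1)$ is $l$-effective. The weight bound and $e$-effectivity: by Lemma \ref{Tog} the eigenvalues of $F_v^{n_v}$ on the graded pieces are Weil numbers of weights $0,-1,-2$; on the weight $0$ and $-1$ pieces these are algebraic integers (units resp. Dieudonn\'e/Weil numbers for abelian varieties), and twisting by $(-1)$ multiplies $F_v$ by $p_v$, so after the twist all eigenvalues become algebraic integers — giving $e$-effectivity of $(\cV,W_\bullet\cV)(-1)$; the weight $-2$ piece $W_{-2}\cV=T_{\og}([0\to{\sf T}_K])$, where $F_v$ acts by $1\otimes p_v^{-1}\sigma_v$ on $Y\otimes W(k_v)$ as computed in Lemma \ref{Tog}, is precisely $(\text{something of weight }0)(1)$ with the weight-zero part given by $Y\otimes K$ with $F_v=1\otimes\sigma_v$, and this latter object is visibly both $l$-effective (the lattice $Y$ is preserved) and $e$-effective (eigenvalues are roots of unity), so $W_{-2}\cV$ is Artin--Lefschetz. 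For $l$-effectivity of $(\cV,W_\bullet\cV)(-1)$ one uses the integral model: $T_{\dr}({\sf M})=\Lie({\sf G}^\natural)$ is a free $\cO_K[1/n]$-module preserved by $p_vF_v$ (the Verschiebung map on the integral model) for all $v\in\cP_n$ by construction, which is exactly the $l$-effectivity condition of Definition \ref{fogeff} for the twist; I would also note $W_{-1}\cV(-1)$ contains $W_{-2}\cV(-1)$ which is $l$-effective by the torus computation, and the quotient $T_{\og}({\sf A}_K)(-1)$ is $l$-effective because the Dieudonn\'e module of $\sf A_{k_v}$ is a lattice stable under Verschiebung. The main obstacle in the whole proof is organizational rather than deep: carefully threading the ``enlarge $n$ finitely many times'' arguments through independence of model, functoriality, and the effectivity statements, and keeping track of the $p_v$-twist so that the integrality/lattice conditions land on the right object; the one genuinely substantive input beyond bookkeeping is the identification in Lemma \ref{Tog} of the crystalline realizations with Dieudonn\'e modules and the resulting purity, which is already established.
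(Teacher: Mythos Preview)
Your approach is essentially the paper's: functoriality via essential uniqueness of models (Lemma~\ref{1mod}) and spreading out, splitting-preservation via the graded computations of Lemma~\ref{Tog}, and the conditions of Definition~\ref{fog1} via the integral lattice $L=T_{\dr}({\sf M})=\Lie({\sf G}^\natural)$ over $\cO_K[1/n]$, which is preserved by $p_vF_v$ and hence witnesses l-effectivity of the $(-1)$-twist---this is exactly the argument the paper gives.

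One small correction: your parenthetical claim that the weight $-1$ eigenvalues of $F_v^{n_v}$ are already algebraic integers is false, since these eigenvalues have archimedean absolute value $p_v^{-n_v/2}<1$ and a nonzero algebraic integer cannot have all conjugates of absolute value $<1$. What is true (and what the paper invokes via \cite{KM}) is that the eigenvalues of $(p_vF_v)^{n_v}$---the genuine crystalline Frobenius---are algebraic integers; this is exactly e-effectivity of the weight $-1$ graded \emph{after} the $(-1)$-twist, so your final conclusion about e-effectivity of $(\cV,W_\bullet\cV)(-1)$ survives, but the justification should go through $p_vF_v$ rather than $F_v$.
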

\begin{proof}  It follows from the proof of the Lemma \ref{Tog}  and Remark \ref{r.leff}~(b). In fact, $T_{\og}({\sf M}_{K})$ is l-effective and e-effective in weight $0$ and Artin-Lefschetz in weight $-2$. Moreover, it is e-effective in weight $-1$ by \cite[Cor. 1 (2)]{KM}. Let $L \df T_{\dr}({\sf M})$ be the Lie algebra of the universal extension of the model ${\sf M}$ over $\Spec (\cO_K [1/n])$ as in Lemma \ref{1mod}. It is a $\cO_K [1/n]$-lattice in $V = T_{\dr}({\sf M}_K)$ and $L\otimes \cO_{K_v}\cong T_{\dr}({\sf M}_{\cO_{K_v}})$ is preserved by $p_vF_v$ as remarked above. Hence  $T_{\og}({\sf M}_{K})(-1)\in \mathbf{FOg}(K)^{\eff}$.
\end{proof}

\begin{defn}\label{bog-og}
The Bost-Ogus realization
\[T_{\bog}\colon \MQ \to \mathbf{BOg}(K)\]
associates  to a  $1$-motive ${\sf M}_{K}$  an object  $T_{\bog}({\sf M}_{K})\df \Psi T_{\og}({\sf M}_{K})(-1)$ in  $\mathbf{BOg}(K)$ where $\Psi$ is the functor defined in \eqref{psifun}. 
\end{defn} 

Note that if  ${\sf M}_{K}=[{\sf L}_{K}\to {\sf G}_{K} ]$, then  $T_{\bog}({\sf M}_{K})=\bLie( {\sf G}_{K}^\natural)$ with $\bLie$ as in  \eqref{bf}.
\subsection{The main Theorem} For a $1$-motive ${\sf M}_{K}=[{\sf u}_{K}\colon {\sf L}_{K}\to {\sf G}_{K}]$ over $K$ consider the universal extension ${\sf M}^\natural$ of a model ${\sf M} $ over $S=\Spec (\cO_K[1/n])$ and the morphism $\tau$ in \eqref{dia.ue}. For every unramified place $v$ in $\cP_{n}$ let ${\sf M}_{\cO_{K_{v}}}^\natural =[{\sf L}_{\cO_{K_{v}}}\to {\sf G}_{\cO_{K_{v}}}^\natural]$ be the base change of ${\sf M}^\natural$ to $\cO_{K_{v}}$. Recall the  $K_{v}$-linear map 
$$\delta_{v}\colon \Lie({\sf L}_{\cO_{K_{v}}}\otimes\G_{a})\otimes_{\cO_{K_{v}}}K_v \to  \Lie({\sf G}^\natural_{\cO_{K_{v}}})\otimes_{\cO_{K_{v}}}K_v$$
considered in Remark \ref{r.tau}.
\begin{lemma}\label{section} The homomorphism  $\delta_{v}$  is the unique section of $\Lie(\tau_{K_{v}})$ in the category of $F$-$K_{v}$-isocrystals.
\end{lemma}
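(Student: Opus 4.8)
The plan is to separate the statement into a formal part (there \emph{is} a unique section of $\Lie(\tau_{K_v})$ in the category of $F$-$K_v$-isocrystals) and a computational part (the $K_v$-linear section $\delta_v$ already produced in Remark~\ref{r.tau} is that section, i.e.\ it commutes with $F_v$). For the formal part: $\tau$ is a morphism of $S$-group schemes, hence reduces modulo $\p_v$, and the comparison $T_{\dr}({\sf M}_{\cO_{K_v}})\simeq T_\cris({\sf M}_{k_v})$ of \cite[Cor.~4.2.1]{ABV} is functorial, so $\Lie(\tau_{K_v})$ is itself a morphism of $F$-$K_v$-isocrystals. By the proof of Lemma~\ref{Tog}, under the identification of $\Lie({\sf L}_{\cO_{K_v}}\otimes\G_a)\otimes K_v$ with the weight-$0$ graded $\gr_0^W\cV_v$, the map $\Lie(\tau_{K_v})$ is the projection $\cV_v\to\gr_0^W\cV_v$; its kernel is $T_{\dr}([0\to{\sf G}_{\cO_{K_v}}])\otimes K_v$, which carries only weights $\leq-1$. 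By Lemma~\ref{wn}(iii) (cf.\ Lemma~\ref{properfil}(4)) $\cV_v=\bigoplus_i H_i$ uniquely into pure isocrystals, $H_0$ maps isomorphically onto $\gr_0^W\cV_v$, and $\gr_0^W\cV_v\xrightarrow{\sim}H_0\hookrightarrow\cV_v$ is an isocrystal section; it is unique because two such differ by a morphism of isocrystals from a pure weight-$0$ object to one with weights $\leq-1$, which vanishes since morphisms between pure isocrystals of different weights are zero. Thus it remains to show $\mathrm{im}(\delta_v)=H_0$, equivalently that $\delta_v$ commutes with $F_v$.

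\textbf{Reductions.} Since $\delta_v$, the weight filtration and $H_0$ are all additive in ${\sf M}$ and functorial for finite unramified extensions of $\cO_{K_v}$ (Lemma~\ref{d.lbd}, Remark~\ref{gal}), one may enlarge $k_v$ so that ${\sf L}$ is constant and treat a single generator, so it suffices to handle ${\sf M}=[{\sf u}\colon{\sf L}\to{\sf G}]$ and descend the final identity along $\mathrm{Gal}(k_v'/k_v)$. Now use that every point of a semiabelian variety over the algebraic closure of a finite field is torsion: the reduction ${\sf u}_{k_v}$ has image in a finite subgroup of ${\sf G}_{k_v}(k_v)$, so for a suitable $m$ one gets an isogeny $(\mathrm{id},[m])\colon{\sf M}_{k_v}\to[{\sf L}_{k_v}\xrightarrow{0}{\sf G}_{k_v}]\simeq[{\sf L}_{k_v}\to0]\oplus[0\to{\sf G}_{k_v}]$. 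Consequently the $F$-isocrystal $\cV_v$ splits along its weight filtration with $H_0$ the image of the lattice part $T_\cris([{\sf L}_{k_v}\to0])\otimes\Q$ — this is the ``geometric realization'' of the decomposition alluded to in the introduction. Lifting $(\mathrm{id},[m])$ to ${\sf M}_{\cO_{K_v}}\to[{\sf L}_{\cO_{K_v}}\xrightarrow{m{\sf u}}{\sf G}_{\cO_{K_v}}]$, the functoriality of $\log$ (Remark~\ref{r.for}) together with the compatibility of $\delta$ with universal extensions and of $H_0$ with the induced isomorphism of $F$-isocrystals reduces us to the case ${\sf u}_{k_v}=0$, i.e.\ ${\sf M}^\natural_{k_v}$ and $T_\cris({\sf M}_{k_v})$ split as $[{\sf L}_{k_v}\to0]^\natural\oplus[0\to{\sf G}_{k_v}]^\natural$, with $H_0$ the summand $\Lie({\sf L}_{\cO_{K_v}}\otimes\G_a)$.

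\textbf{The core computation (main obstacle).} In the case ${\sf u}_{k_v}=0$ one must still show $\log_{{\sf G}^\natural,\Q}({\sf u}^\natural(1))$ lies in the weight-$0$ summand $H_0$ — equivalently that it is fixed by $F_v^{n_v}$ — even though the 1-motive is \emph{not} split over $\cO_{K_v}$, so that $\delta_v$ genuinely sees the lift ${\sf u}_{\cO_{K_v}}$, as does the crystalline Frobenius via the comparison isomorphism. This matching is the heart of the proof: one must invoke the explicit description of the crystalline Frobenius on $T_\cris({\sf M}_{k_v})\simeq\Lie({\sf G}^\natural_{\cO_{K_v}})$ from \cite[\S4]{ABV} and devisser ${\sf G}$ along $0\to{\sf T}\to{\sf G}\to{\sf A}\to0$. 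On the lattice part the assertion is the last statement of Lemma~\ref{d.lbd}, since $\log_{\G_a,\Q}$ is the identity (Example~\ref{ex.ga}(a)); on the torus part one compares the $p$-adic logarithm of Example~\ref{ex.ga}(b) with the Frobenius $p_v^{-1}\sigma_v$ computed in the proof of Lemma~\ref{Tog}; on the abelian part one uses Example~\ref{ex.gagm}(b) and Lemma~\ref{l.pth} (duality exchanges Frobenius and Verschiebung, and $T_\cris({\sf A}_{k_v})$ is the covariant Dieudonné module). Because the crystalline Verschiebung $\Phi_v$ respects the weight filtration (Lemma~\ref{wn}), it suffices to carry out this verification on the pure graded pieces after the isogeny above; the resulting identity — that the ``logarithm of the lift'' cancels precisely the off-weight contribution of $\Phi_v$ — gives $F_v\circ\delta_v=\delta_v\circ F_v$, hence $\mathrm{im}(\delta_v)=H_0$, so $\delta_v$ is a section of $\Lie(\tau_{K_v})$ in the category of $F$-$K_v$-isocrystals, and by the uniqueness established above it is \emph{the} section.
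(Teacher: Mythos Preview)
Your uniqueness argument is correct and coincides with the paper's: a second section would differ from the first by a morphism of isocrystals from a pure weight-$0$ object into one of weights $\le -1$, hence is zero by Lemma~\ref{wn}. Your reduction via the isogeny $(\mathrm{id},[m])$ to the case ${\sf u}_{k_v}=0$ is also valid, though unnecessary.

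The genuine gap is in the ``core computation''. After your reduction the isocrystal $\cV_v$ does split as $\oplus_i H_i$, but $H_0$ is a subspace of $\Lie({\sf G}^\natural_{\cO_{K_v}})\otimes K_v$ only \emph{via the crystalline comparison}, and that comparison depends on the lift ${\sf M}_{\cO_{K_v}}$, which is not split. There is no ``summand $\Lie({\sf L}_{\cO_{K_v}}\otimes\G_a)$'' of $\Lie({\sf G}^\natural_{\cO_{K_v}})$: $\tau$ exhibits it only as a quotient. So the sentence ``with $H_0$ the summand $\Lie({\sf L}_{\cO_{K_v}}\otimes\G_a)$'' begs the question, and the claim that ``it suffices to carry out this verification on the pure graded pieces'' is not right: what you must show is that the $H_{-1}$- and $H_{-2}$-components of $\delta_v(1)$ vanish, and this is not a statement about the graded pieces of any filtered map. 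The d\'evissage along $0\to{\sf T}\to{\sf G}\to{\sf A}\to 0$ is orthogonal to the difficulty: the obstruction lives in the extension of $[{\sf L}\to 0]$ by $[0\to{\sf G}]$, not inside ${\sf G}$.

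What the paper actually uses---and what your phrase ``invoke the explicit description of the crystalline Frobenius from \cite[\S4]{ABV}'' points to without unpacking---is that $\Phi_v$ on $T_{\dr}({\sf M}_{\cO_{K_v}})$ is \emph{induced by a morphism of formal group schemes}: the canonical lift $V^\natural=\varprojlim_n V_n^\natural\colon \sigma_v^*\widehat{{\sf G}}^\natural\to\widehat{{\sf G}}^\natural$ of the Verschiebung on ${\sf M}^\natural_{k_v}$, coming from the crystalline nature of the universal extension. This lift satisfies $(V^\natural)_0\circ\sigma_v^*\hat{\sf u}^\natural=\hat{\sf u}^\natural\circ (V^\natural)_{-1}$ with $(V^\natural)_{-1}$ multiplication by $p_v$ on ${\sf L}$. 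Now the functoriality of the logarithm for \emph{formal} groups (Remark~\ref{r.for}) applied to $V^\natural$ gives directly
\[
(\Lie V^\natural\otimes\mathrm{id}_{K_v})\circ(\delta_v\otimes_{\sigma_v}\mathrm{id})=\delta_v\circ(\text{mult.\ by }p_v),
\]
i.e.\ $\delta_v$ intertwines $p_vF_v$ on source and target, hence intertwines $F_v$. No reduction and no d\'evissage are needed. Your outline becomes a proof precisely when you replace the d\'evissage paragraph by this single application of Remark~\ref{r.for} to the crystalline lift $V^\natural$.
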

\begin{proof} By Remark  \ref{r.tau} $\delta_v$ is a $K_v$-linear section of $\Lie(\tau_{K_{v}})$.   Since $p_{v}F_{v}$ on $T_\dr({\sf M}_{K_{v}})$ is the $K_{v}$-linear extension of the  Frobenius $\mathfrak f_{\cO_{K_{v}}}$ on $T_\cris({\sf M}_{k_{v}})$  discussed in \cite[\S 4.1]{ABV}, it suffices to show that $\delta_{v}$ is Frobenius equivariant  and, hence, provides a splitting of  $F$-$K_{v}$-isocrystals. Now Lemma \ref{wn}(iii) and the fact that morphisms of pure $F$-$K_{v}$-isocrystals of different weight are trivial imply that such a splitting is unique.

Recall that   $\mathfrak f_{\cO_{K_{v}}}$ on $T_{\cris}({\sf M}_{k_{v}})$ is defined by the Verschiebung $ {\sf M}_{k_{v}}^{(p)}\to {\sf M}_{k_{v}}$ thanks to the crystalline nature of universal extensions (see \cite[\S 4]{ABV}, \cite[Thm.  2.1]{ABe}). 
More precisely, for any $n\in \N$ there is a canonical  lift $V_n^\natural\colon \sigma_{v}^* {\sf M}_{W_n(k_{v})}^\natural\to {\sf M}_{W_n(k_{v})}^\natural$ of the Verschiebung on ${\sf M}_{k_{v}}^\natural $  where   $\sigma_{v}$ also denotes the Frobenius on $\spec (W_n(k_{v}))$, and the homomorphism $\Lie(V_n^\natural)\colon T_{\dr}({\sf M}_{W_n(k_{v})})\otimes_{\sigma_{v}} W_n(k_{v})\to T_{\dr}({\sf M}_{W_n(k_{v})})$ defines the Frobenius $\mathfrak f_{W_n(k_{v})}$ on $T_{\dr}({\sf M}_{W_n(k_{v})})$. Now, the construction is compatible with the truncation maps. Hence the  morphisms $V_n^\natural$, $n\geq 1$, provide a morphism $V^\natural$ of complexes of  formal schemes over $\cO_{K_v}=W(k_v)$
\begin{equation*}
\xymatrix{\sigma_{v}^*{\sf L}_{\cO_{K_{v}}}\ar[rr]^{(V^\natural)_{-1}}\ar[d]^{\sigma_{v}^* \hat{\sf u}}& &{\sf L}_{\cO_{K_{v}}}\ar[d]^{ \hat{\sf u}}\\
\sigma_{v}^*\widehat{{\sf G}}^\natural_{\cO_{K_{v}}}\ar[rr]^{(V^\natural)_{0}}& &\widehat {{\sf G}}^\natural_{\cO_{K_{v}}}.}
\end{equation*} Note that since the Verschiebung on ${\sf L}_{k_{v}}$ can be identified with the multiplication by $p_{v}$, the morphism $(V^\natural)_{-1} $ maps $x\in {\sf L}_{\cO_{K_{v}}}\times_{\cO_{K_{v}},\sigma_{v}} \mathrm{Spf} (\cO_{K_{v}})$ to $p_{v}x\in {\sf L}_{\cO_{K_{v}}}$. 
By the functoriality of (the formal) $\delta_{v}$ in Remark \ref{r.for} we then get a commutative diagram 
\begin{equation*}
\xymatrix{\Lie({\sf L}_{\cO_{K_{v}}}\otimes \G_a)\otimes_{\cO_{K_{v}}} K_{v}\otimes_{\sigma_{v}} K_{v}\ar[rrr]^{x\otimes1\otimes 1\mapsto p_{v}x\otimes 1  }\ar[d]^{\delta_{v}\otimes \mathrm{id}}&& &\Lie({\sf L}_{\cO_{K_{v}}}\otimes \G_a)\otimes_{\cO_{K_{v}}} K_{v} \ar[d]^{ \delta_{v}}\\
\Lie({\sf G}^\natural_{\cO_{K_{v}}})\otimes_{\cO_{K_{v}}} K_{v}\otimes_{\sigma_{v}} K_{v} \ar[rrr]^{ }&& & \Lie({\sf G}^\natural_{\cO_{K_{v}}})\otimes_{\cO_{K_{v}}} K_{v}. }
\end{equation*}
Since the upper (respectively, lower) horizontal arrow defines the Frobenius $\mathfrak f_{\cO_{K_{v}}}$ on $T_{\dr}([{\sf L}_{K_{v}}\to 0])$  (respectively, on $T_{\dr}( {\sf M}_{K_{v}})$), the result follows.
\end{proof}

\begin{lemma}\label{pro.tog0}
 The functor $T_{\og}$ is faithful.
\end{lemma}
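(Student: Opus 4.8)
The plan is to deduce faithfulness of $T_{\og}$ from faithfulness of the de Rham realization alone. The forgetful functor from $\mathbf{FOg}(K)$ to the category of $K$-vector spaces is faithful, since a morphism in $\mathbf{Og}(K)$ — hence in $\mathbf{FOg}(K)$ — is by definition merely a $K$-linear map of the underlying vector spaces satisfying a compatibility condition with the operators $F_{v}$; and by construction (see Definition \ref{TOg} and Proposition \ref{OGreal}) the composite of $T_{\og}$ with this forgetful functor is the de Rham realization on $\MQ$: the underlying space of $T_{\og}({\sf M}_{K})$ is $T_{\dr}({\sf M}_{K})$ and the underlying linear map of $T_{\og}(\varphi)$ is $T_{\dr}(\varphi)$. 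Since a functor whose composite with a faithful functor is faithful is itself faithful, it suffices to prove that $T_{\dr}\colon\MQ\to\{K\text{-vector spaces}\}$ is faithful.

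So let $\varphi=(h_{-1},h_{0})\colon{\sf M}_{K}\to{\sf N}_{K}$ be a morphism in $\MQ$ with $T_{\dr}(\varphi)=0$; after rescaling by a positive integer I may assume $\varphi$ is an honest morphism of $1$-motives. Passing to universal $\G_{a}$-extensions, $\varphi$ induces $\varphi^{\natural}=(h_{-1},(\varphi^{\natural})_{0})$ with a component $(\varphi^{\natural})_{0}\colon{\sf G}_{K}^{\natural}\to{\sf G}_{K}'^{\natural}$, and by Definition \ref{Tdr} one has $T_{\dr}(\varphi)=\Lie((\varphi^{\natural})_{0})$. Now ${\sf G}_{K}^{\natural}$, being an extension of the semiabelian group ${\sf G}_{K}$ by a vector group, is a connected commutative algebraic group over the characteristic-zero field $K$; hence the vanishing of $\Lie((\varphi^{\natural})_{0})$ forces $(\varphi^{\natural})_{0}=0$ by \cite[II, \S 6, n. 2, Prop. 2.1 (b)]{DG}. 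This is precisely the faithfulness input already used in the proof of Theorem \ref{bost}.

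Finally I would recover $\varphi$ itself from $(\varphi^{\natural})_{0}=0$ using the functoriality of the push-out diagram \eqref{dia.ue}. Writing $\tau,\tau'$ and $\rho,\rho'$ for the surjections ${\sf G}^{\natural}\twoheadrightarrow{\sf L}\otimes\G_{a}$ and ${\sf G}^{\natural}\twoheadrightarrow{\sf G}$ of \eqref{dia.ue} attached to ${\sf M}$ and to ${\sf N}$, naturality of $\tau$ gives $(h_{-1}\otimes\G_{a})\circ\tau=\tau'\circ(\varphi^{\natural})_{0}=0$; as $\tau$ is faithfully flat, $h_{-1}\otimes\G_{a}=0$, i.e. $h_{-1}=0$. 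Likewise naturality of $\rho$ gives $h_{0}\circ\rho=\rho'\circ(\varphi^{\natural})_{0}=0$, and since $\rho$ is faithfully flat, $h_{0}=0$. Hence $\varphi=0$, so $T_{\dr}$, and therefore $T_{\og}$, is faithful. I do not expect a genuine obstacle here: the only non-formal ingredient is the classical fact that a homomorphism of connected algebraic groups in characteristic zero with zero differential is zero, and the whole argument is the straightforward counterpart of the easy (faithfulness) half of Bost's theorem.
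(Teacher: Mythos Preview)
Your proof is correct and complete, but it is not the argument the paper gives. The paper's proof passes to $\C$: from $T_{\dr}(\varphi)=0$ it deduces $T_{\dr}(\varphi_{\C})=0$, then uses the de Rham--Betti comparison \cite[10.1.8]{De} to get $T_{\Z}(\varphi_{\C})\otimes\C=0$, hence $nT_{\Z}(\varphi_{\C})=0$ for some $n$, and finally invokes Deligne's full faithfulness of the Hodge realization on $1$-motives \cite[10.1.3]{De} to conclude $n\varphi_{\C}=0$. Your route is purely algebraic and stays over $K$: you use that a homomorphism of connected commutative algebraic groups in characteristic zero with vanishing differential is zero, exactly the faithfulness half of Theorem~\ref{bost}, and then read off $h_{-1}=0$ and $h_{0}=0$ from the functoriality of \eqref{dia.ue}. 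The paper itself acknowledges this alternative in the Remark immediately following Lemma~\ref{pro.tog0}. Your approach is more self-contained (no transcendental input, no appeal to Deligne's Hodge-theoretic results) and arguably more in the spirit of the rest of the paper; the paper's proof has the minor advantage of being a one-line reduction to a standard reference.
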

\begin{proof}
Let $\varphi\colon {\sf M}_{K}\to {\sf N}_{K}$ be a morphism of $1$-motives such that $T_{\og}(\varphi)=0$.
In particular $T_{\dr}(\varphi)=0$. Then $n\varphi=0$ for a suitable $n$. Indeed,
 let $\varphi_{\C}$ be the base change of $\varphi$ to $\C$. Then $T_{\dr}(\varphi_\C)=0$ implies $ T_\C(\varphi_\C)\colon T_\Z({\sf M}_\C)\otimes_\Z \C\to T_\Z({\sf N}_\C)\otimes_\Z \C $ is the zero map by \cite[10.1.8]{De}. Hence $nT_\Z(\varphi_\C)=0$ for a suitable $n$. Then one concludes that $n\varphi_\C=0$ by \cite[10.1.3]{De} and hence $n\varphi=0$.
\end{proof}

\begin{remark} One could provide an alternative proof of Lemma \ref{pro.tog0} using an argument similar to the one adopted for the faithfulness in Theorem \ref{bost}.
\end{remark}

\begin{lemma}\label{pro.tog1}
 The functor $T_{\og}$ restricted to $\mathcal{M}_{0,\Q}$ is  full.
\end{lemma}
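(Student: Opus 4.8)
The plan is to reduce the fullness of $T_{\og}$ on $\mathcal{M}_{0,\Q}$ to a purely Galois-theoretic statement about Artin motives, and then to show that $T_{\og}$ records enough of the Galois action to detect all morphisms. First I would recall that an object of $\mathcal{M}_{0,\Q}$ is a $1$-motive $[{\sf L}_K\to 0]$ with ${\sf L}_K$ a lattice with continuous $\mathrm{Gal}(\bar K/K)$-action (equivalently, a finite-dimensional $\Q$-representation of $\mathrm{Gal}(\bar K/K)$ factoring through a finite quotient), and that on such objects $T_{\dr}([{\sf L}_K\to 0]) = {\sf L}_K\otimes_\Q K$ as a $K$-vector space, with the Frobenius structure $F_v = 1\otimes\sigma_v$ on $V_v\simeq{\sf L}_K\otimes_\Q K_v$ at almost all unramified $v$ (this is spelled out in the proof of Lemma~\ref{Tog}). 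Given a morphism $\psi\colon T_{\og}([{\sf L}_K\to 0])\to T_{\og}([{\sf L}'_K\to 0])$ in $\mathbf{FOg}(K)$, I want to produce a morphism of $1$-motives, i.e.\ a $\mathrm{Gal}(\bar K/K)$-equivariant $\Q$-linear map ${\sf L}_K\to{\sf L}'_K$, inducing $\psi$.

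The key step is the following: $\psi$ is a $K$-linear map ${\sf L}_K\otimes_\Q K\to{\sf L}'_K\otimes_\Q K$ that commutes with $1\otimes\sigma_v$ for almost all $v$. After passing to a finite Galois extension $E/K$ that splits both ${\sf L}_K$ and ${\sf L}'_K$ (so that $\mathrm{Gal}(\bar K/K)$ acts through $\mathrm{Gal}(E/K)$), I would view $\psi$ as an element of $\Hom_\Q({\sf L}_K,{\sf L}'_K)\otimes_\Q K$ and analyze the Frobenius condition. The idea is that compatibility with $1\otimes\sigma_v$ for a density-one set of places, via Chebotarev, forces $\psi$ to be Galois-invariant: if we write $\psi = \sum a_i\otimes e_i$ with $a_i\in\Hom_\Q({\sf L}_K,{\sf L}'_K)$ and $e_i$ a basis of $K$ over $\Q$, then for each $g\in\mathrm{Gal}(E/K)$ we may choose (Chebotarev) an unramified place $v$ in the allowed cofinite set whose Frobenius conjugacy class is $g$; the commutation of $\psi$ with $1\otimes\sigma_v$, together with the fact that $\sigma_v$ acts on the residue data exactly as the arithmetic Frobenius $g$ acts on ${\sf L}_K$ and ${\sf L}'_K$, yields $g\cdot a_i = a_i$ for all $i$, hence $\psi$ descends to a $K$-linear map already defined over $\Q$ and commuting with the Galois action. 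This $\Q$-linear Galois-equivariant map is precisely a morphism $[{\sf L}_K\to 0]\to[{\sf L}'_K\to 0]$ in $\mathcal{M}_{0,\Q}$, and by construction its de Rham realization — hence its $T_{\og}$ — recovers $\psi$. Alternatively, one could phrase this via the functor $\bLie$ and Bost's theorem (Theorem~\ref{bost}) applied to the $0$-dimensional groups ${\sf G}_K^\natural$, which is essentially the content of the sentence in the introduction that ``for pure $0$-motives [fullness] is our Lemma~\ref{pro.tog1}''.

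The main obstacle I anticipate is bookkeeping of the two Frobenius conventions and the cofiniteness quantifier: one must be careful that the condition defining a morphism in $\mathbf{FOg}(K)$ only requires compatibility with $F_v$ for $v$ in \emph{some} cofinite set $\cP'$, so the Chebotarev argument must be run with $\cP'$ intersected with the (cofinite) set of good unramified places of both motives, and one must verify that on this set the semilinear map $F_v$ acting on $V_v = {\sf L}_K\otimes_\Q K_v$ is genuinely $\mathrm{id}\otimes\sigma_v$ with $\sigma_v$ the canonical Frobenius — which is exactly what was established in Lemma~\ref{Tog}. Once that identification is in hand, the descent via Chebotarev density is routine. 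A secondary (minor) point is that faithfulness is already Lemma~\ref{pro.tog0}, so only surjectivity on Hom-sets needs to be argued here; and since $\mathcal{M}_{0,\Q}$ and its image consist of semisimple objects, no strictness or weight-filtration subtleties arise.
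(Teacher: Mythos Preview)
Your plan is headed in the right direction—a density argument is indeed the heart of the matter—but there is a genuine confusion in the Chebotarev step, and the paper's proof organizes things differently and more cleanly.

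The confusion: the operator $F_v$ on $V_v$ is $\sigma_v$-semilinear for $\sigma_v$ the absolute Frobenius of $K_v/\Q_{p_v}$. Commutation of a $K_v$-linear map with $1\otimes\sigma_v$ constrains the \emph{matrix entries} (they must lie in the fixed field $\Q_{p_v}$); it says nothing directly about equivariance for $\mathrm{Gal}(E/K)$. So your step ``for each $g\in\mathrm{Gal}(E/K)$ choose $v$ with Frobenius $g$ and deduce $g\cdot a_i=a_i$'' does not go through as written: the Frobenius relevant to that conclusion would be the one acting on the lattice ${\sf L}(\bar K)$, whereas the $\sigma_v$ in $F_v$ acts on the coefficient ring. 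Relatedly, your decomposition $\psi=\sum a_i\otimes e_i$ with $a_i\in\Hom_\Q({\sf L}_K,{\sf L}'_K)$ presupposes an identification $\Hom_K(T_\dr({\sf M}),T_\dr({\sf N}))\simeq \Hom_\Q({\sf L}_K,{\sf L}'_K)\otimes_\Q K$ that is only available once the lattices are split. Your alternative via Bost is also a dead end here: for ${\sf M}_K=[{\sf L}_K\to 0]$ one has ${\sf G}_K^\natural={\sf L}_K\otimes\G_a$, a positive-dimensional vector group with zero Verschiebung, so the $\mathbf{BOg}$-structure is trivial and Theorem~\ref{bost} returns nothing new.

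The paper does what a corrected version of your argument would do, in two clean steps. First the \emph{split} case ${\sf L}_K=\Z^r$, ${\sf L}'_K=\Z^s$: commutation with $F_v=1\otimes\sigma_v$ forces each matrix entry $c_{ij}\in K$ to satisfy $\sigma_v(c_{ij})=c_{ij}$, equivalently $c_{ij}^{p_v}\equiv c_{ij}\pmod{p_v}$, for almost all $v$; Kronecker's theorem (itself a Chebotarev-type statement, but for $K/\Q$, not $E/K$) then gives $c_{ij}\in\Q$, so after clearing denominators $\psi$ comes from a map $\Z^r\to\Z^s$. Second, the general case by \emph{Galois descent}: base-change to a splitting field $K'$, apply the split case to get $\varphi_{K'}\colon{\sf L}_{K'}\to{\sf F}_{K'}$, and check that $\varphi_{K'}$ descends to $K$ because $\psi$ was already $K$-rational. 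The $\mathrm{Gal}(E/K)$-equivariance you were after thus comes for free from descent, with no second Chebotarev argument needed.
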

\begin{proof}
First  consider two $1$-motives ${\sf M}_{K}=[\Z^r\to 0]$, ${\sf N}_{K}=[\Z^s\to 0]$. Write $e_{1},\dots,e_r$ for the  standard basis of $\Z^r$ and similarly for $\Z^s$. We use the same letters for the induced bases on the de Rham realizations. Any morphism $\psi\colon T_{\dr}({\sf M}_{K})\to T_{\dr}({\sf N}_{K})$ corresponds  to an $s\times r$ matrix $C=(c_{ij}) \in M_{s,r}( K)$, \ie $\psi(e_{j})=  \sum_{i} c_{ij} e_{i}$. Observe that $c_{ij}\in \cO[1/n]$ for $n$  sufficiently divisible and hence $C\in M_{s,r}(\cO_{K_{v}})$ for  $v\in \cP_{n}$. If $\psi$ is a morphism in $\mathbf{Og}(K)$, the compatibility with the $F_{v}$'s implies that $\sigma_{v}(C)=C$ where $\sigma_{v}(C)=(\sigma_{v} (c_{ij}))$. Indeed \[F_{v}(\psi(e_{j}))=F_{v}(\sum_{i} c_{ij}e_{i})=\sum_{i}\sigma_{v}(c_{ij})e_{i}, \] and \[  \psi(F_{v}(e_{j}))=\psi(e_{j})=\sum_{i} c_{ij} e_{i}.\] Now $\sigma_{v}(c_{ij})=c_{ij}$ for  all $v\in \cP_{n}$,  is equivalent to  $c_{ij}^{p_{v}}\equiv c_{ij} \text{ (mod }p_{v})$  for  all $v\in \cP_{n}$. We conclude by Kronecker's theorem that $c_{ij}\in \Q$; hence $C\in  M_{s,r}(\Q)$. Let $m$ be the a positive integer such that $mC\in   M_{s,r}(\Z)$. Then  $m\psi=\Lie(\varphi\otimes \mathrm{id})$ with $\varphi\in \mathrm{Hom}({\sf M}_{K},{\sf N}_{K}) \simeq  M_{s,r}(\Z)  $ the homomorphism which maps $e_{j}$ to $\sum_{i} mc_{ij}e_{i}$  in degree $-1$.

We conclude the proof of the fullness by Galois descent. Let ${\sf M}_{K}=[{\sf L}_{K}\to 0], {\sf N}_{K}=[{\sf F}_{K}\to 0]$ be $1$-motives in $\mathcal{M}_{0,\Q}$ and let $\psi\colon T_{\dr}({\sf M}_{K})\to T_{\dr}({\sf N}_{K})$ be a morphism in $\mathbf{FOg}(K)$; in particular the base change of $\psi$ to $K_{v}$ is compatible with the $F_{v}$'s for almost all places $v$. 
Let $K^\prime$ be a finite  Galois  extension of $K$  such that ${\sf L}_{K^{\prime}}$ and ${\sf F}_{K^{\prime}}$ are split. Then, any unramified place $v^{\prime}$ of $K^{\prime}$ is above an unramified place $v$ of $K$ and   $K^{\prime}_{v^{\prime}}/K_{v}$ is a finite unramified extension. Then by \cite[Corollary 4.2.1]{ABV} and the fact that the de Rham realization and the Verschiebung morphism are compatible with extension of the base, the formation of $T_{\og}({\sf M}_{K})$ behaves well with respect to base field extension. Let $\psi_{K^{\prime}}\colon T_{\dr}({\sf M}_{K^{\prime}})\to T_{\dr}({\sf N}_{K^{\prime}})$ be the morphisms in $\mathbf{FOg}(K^{\prime})$ induced by $\psi$. 
Denote by the same letter also the associated morphism of vector groups ${\sf L}_{K^{\prime}}\otimes \G_{a}\to {\sf F}_{K^{\prime}}\otimes \G_{a}$.
By above discussions we may assume that $\psi_{K^{\prime}}$ comes from a morphism $\varphi_{K^{\prime}}\colon  {\sf L}_{K^{\prime}}\to  {\sf F}_{K^{\prime}}$. Let us denote by $\zeta$ both an element in $\mathrm{Gal}(K^{\prime}/K)$ and the induced morphism on $1$-motives. In order to check that $\varphi_{K^{\prime}}$ descends to $K$, it suffices to check that $\zeta  \circ \varphi_{K^{\prime}}=\varphi_{K^{\prime}}\circ \zeta$. 
Since we work over fields of characteristic $0$, the morphism $\iota_{{\sf F}}\colon {\sf F}_{K^{\prime}}\to  {\sf F}_{K^{\prime}}\otimes \G_{a}, x\mapsto x\otimes 1$ has trivial kernel. Hence it is sufficient to check that  $\iota_{{\sf F}}\circ \zeta \circ \varphi_{K^{\prime}}=\iota_{{\sf F}}\circ\varphi_{K^{\prime}}\circ \zeta$. Now,
\[\iota_{{\sf F}}\circ \zeta \circ \varphi_{K^{\prime}}=
\zeta \circ \iota_{{\sf F}} \circ \varphi_{K^{\prime}}=
\zeta \circ \psi_{K^{\prime}}  \circ \iota_{{\sf L}}=
 \psi_{K^{\prime}}  \circ \zeta\circ \iota_{{\sf L}}=
  \psi_{K^{\prime}}  \circ \iota_{{\sf L}}\circ \zeta=
  \iota_{{\sf F}}\circ\varphi_{K^{\prime}}\circ \zeta.
\]
This concludes the proof.
\end{proof}

\begin{thm}\label{thm.tog}
The functor $T_{\og}\colon \MQ \to \mathbf{FOg}(K)$  is fully faithful.
\end{thm}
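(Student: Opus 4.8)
Faithfulness is Lemma~\ref{pro.tog0}, so the plan is to prove fullness. Fix $1$-motives ${\sf M}_{K}=[{\sf u}_{K}\colon {\sf L}_{K}\to {\sf G}_{K}]$ and ${\sf N}_{K}=[{\sf v}_{K}\colon {\sf L}'_{K}\to {\sf H}_{K}]$ with universal $\G_{a}$-extensions ${\sf M}^{\natural}_{K}=[{\sf u}^{\natural}_{K}\colon {\sf L}_{K}\to {\sf G}^{\natural}_{K}]$ and ${\sf N}^{\natural}_{K}=[{\sf v}^{\natural}_{K}\colon {\sf L}'_{K}\to {\sf H}^{\natural}_{K}]$, and take a morphism $\psi\colon T_{\og}({\sf M}_{K})\to T_{\og}({\sf N}_{K})$ in $\mathbf{FOg}(K)$. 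First I would record, via Lemma~\ref{properfil}~(2), that the underlying $K$-linear map $\psi\colon T_{\dr}({\sf M}_{K})\to T_{\dr}({\sf N}_{K})$ is strict for the weight filtrations and that, for almost all $v$, its completion $\psi_{K_{v}}$ is a morphism of $F$-$K_{v}$-isocrystals, hence respects the weight decomposition of Lemma~\ref{wn}~(iii). Next, applying $\Psi((-)(-1))$ from \eqref{psifun} — legitimate because $T_{\og}({\sf M}_{K})(-1)\in\mathbf{FOg}(K)^{\eff}$ by Proposition~\ref{OGreal} — turns $\psi$ into a morphism $T_{\bog}({\sf M}_{K})\to T_{\bog}({\sf N}_{K})$ in $\mathbf{BOg}(K)$, that is, a morphism $\bLie({\sf G}^{\natural}_{K})\to\bLie({\sf H}^{\natural}_{K})$; by Bost's Theorem~\ref{bost} it equals $\bLie(g^{\natural})$ for a unique $g^{\natural}\in\Hom_{\mathbf{GCC}(K)_{\Q}}({\sf G}^{\natural}_{K},{\sf H}^{\natural}_{K})$, and comparing underlying $K$-vector spaces gives $\Lie(g^{\natural})=\psi$. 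Since ${\sf G}^{\natural}_{K}$ is connected commutative over a field of characteristic $0$ with semiabelian quotient ${\sf G}_{K}$, its maximal vector subgroup is $\V({\sf M}_{K})$ (any additive subgroup maps trivially to ${\sf G}_{K}$), so $g^{\natural}$ carries $\V({\sf M}_{K})$ into $\V({\sf N}_{K})$ and descends to $g\in\Hom_{\mathbf{GCC}(K)_{\Q}}({\sf G}_{K},{\sf H}_{K})$.

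What then remains is to find $\ell\in\Hom_{\MQ}({\sf L}_{K},{\sf L}'_{K})$ with $g^{\natural}\circ{\sf u}^{\natural}_{K}={\sf v}^{\natural}_{K}\circ\ell$ in $\MQ$: granting this, $(\ell,g)$ is a morphism ${\sf M}_{K}\to{\sf N}_{K}$ (apply the projections ${\sf G}^{\natural}_{K}\to{\sf G}_{K}$, ${\sf H}^{\natural}_{K}\to{\sf H}_{K}$ of \eqref{dia.ue} to get $g\circ{\sf u}_{K}={\sf v}_{K}\circ\ell$), its functorial lift to universal extensions is $(\ell,g^{\natural})$, so $T_{\dr}(\ell,g)=\Lie(g^{\natural})=\psi$ and therefore $T_{\og}(\ell,g)=\psi$ since a morphism in $\mathbf{FOg}(K)$ is determined by its underlying $K$-linear map. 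I would take $\ell$ to be the morphism underlying $\gr_{0}^{W}\psi\colon T_{\og}([{\sf L}_{K}\to 0])\to T_{\og}([{\sf L}'_{K}\to 0])$, which exists by Lemma~\ref{pro.tog1}, so that $\gr_{0}^{W}\psi=\ell\otimes\mathrm{id}$ on $\gr_{0}^{W}T_{\dr}$. Because ${\sf L}_{K}$ is a lattice and ${\sf H}^{\natural}_{K}(K)$ injects into ${\sf H}^{\natural}_{K}(K_{v})$, it is enough (after a finite unramified base change) to check $g^{\natural}\circ{\sf u}^{\natural}_{K}={\sf v}^{\natural}_{K}\circ\ell$ up to isogeny after base change to $\cO_{K_{v}}$ for a single unramified place $v$ of good reduction, where the $p$-adic logarithm of Remark~\ref{r.for} is available; as $\log$ is an isomorphism and by the functoriality of $\delta$ in both the lattice and the group (Remark~\ref{r.for}), using $\Lie(g^{\natural})=\psi$, this reduces to the identity
\[\psi_{K_{v}}\circ\delta_{{\sf u}^{\natural}_{\cO_{K_{v}}}}=\delta_{{\sf v}^{\natural}_{\cO_{K_{v}}}}\circ(\ell\otimes\mathrm{id})\colon \Lie({\sf L}_{\cO_{K_{v}}}\otimes\G_{a})\otimes K_{v}\longrightarrow T_{\dr}({\sf N}_{K_{v}}).\]

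The crux is this last identity, and here the key input is Lemma~\ref{section}. Writing $\delta_{v}=\delta_{{\sf u}^{\natural}_{\cO_{K_{v}}}}$ and $\delta'_{v}=\delta_{{\sf v}^{\natural}_{\cO_{K_{v}}}}$, these are the unique sections of $\Lie(\tau_{K_{v}})$, $\Lie(\tau'_{K_{v}})$ in the category of $F$-$K_{v}$-isocrystals, so their images are exactly the weight-$0$ summands of $T_{\dr}({\sf M}_{K_{v}})$, $T_{\dr}({\sf N}_{K_{v}})$, while $\Lie(\tau_{K_{v}})$, $\Lie(\tau'_{K_{v}})$ are the canonical projections onto $\gr_{0}^{W}T_{\dr}$, with kernel $W_{-1}$ (by compatibility of the weight filtration of $1$-motives with $T_{\dr}$ and universal extensions; see \eqref{dia.ue} and the paragraph after it). Since $\psi_{K_{v}}$ respects both the weight decomposition and the weight filtration, it maps $\im\delta_{v}$ into $\im\delta'_{v}$, hence $\psi_{K_{v}}\circ\delta_{v}=\delta'_{v}\circ\eta$ where, applying $\Lie(\tau'_{K_{v}})$ and using $\Lie(\tau_{K_{v}})\circ\delta_{v}=\mathrm{id}$ together with the fact that $\psi_{K_{v}}$ respects $\ker\Lie(\tau_{K_{v}})=W_{-1}$, one gets $\eta=(\gr_{0}^{W}\psi)_{K_{v}}=\ell\otimes\mathrm{id}$, which is the desired identity. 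The only genuinely non-formal ingredient, and precisely what does not follow from Bost's theorem alone, is this identification: the isocrystal weight decomposition of $T_{\dr}({\sf M}_{K_{v}})$ is realized geometrically by the $p$-adic logarithm of ${\sf u}^{\natural}$ (Lemma~\ref{section}), which is what forces $g^{\natural}$ to be compatible with the lattices.
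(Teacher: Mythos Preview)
Your proof is correct and follows essentially the same strategy as the paper: Bost's theorem produces $g^{\natural}$, Lemma~\ref{pro.tog1} produces $\ell$, and the compatibility $g^{\natural}\circ{\sf u}^{\natural}={\sf v}^{\natural}\circ\ell$ is checked at a single good place via the $p$-adic logarithm, with Lemma~\ref{section} as the key input identifying the weight-$0$ isocrystal summand with the image of $\delta_{v}$. Your abstract phrasing of the final step (via the weight decomposition and the uniqueness of the isocrystal section) is a clean repackaging of the paper's commutative diagram; the only point worth a sentence of justification is the assertion that ``its functorial lift to universal extensions is $(\ell,g^{\natural})$'', which amounts to uniqueness of the lift of $(\ell,g)$ to ${\sf M}^{\natural}_{K}\to{\sf N}^{\natural}_{K}$ --- this follows from $\Hom_{\Mal}({\sf M}^{\natural}_{K},[0\to\V({\sf N}_{K})])=0$, itself a consequence of the universal property of ${\sf M}^{\natural}_{K}$ (the paper encodes the same fact in the phrase ``$T_{\bog}(\varphi_{K})=\psi'$ by construction'' together with faithfulness of $\Psi$).
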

\begin{proof} The faithfulness was proved in Lemma \ref{pro.tog0}.
For the fullness, let ${\sf M}_{K}=[{\sf u}_{K}\colon {\sf L}_{K}\to {\sf G}_{K}]$ and ${\sf N}_{K}=[{\sf v}_{K}\colon {\sf F}_{K}\to {\sf H}_{K}]$ be $1$-motives.  Suppose  given a morphism $\psi\colon T_{\og}({\sf M}_{K})\to T_{\og}({\sf N}_{K})$ in $\mathbf{FOg}(K)$. By Definition \ref{bog-og} we also get a morphism $\psi^{\prime}\colon T_{\bog}({\sf M}_{K})\to T_{\bog}({\sf N}_{K})$ in  $\mathbf{BOg}(K)$. Using Theorem \ref{bost} the morphism $\psi^{\prime}$ comes  from a morphism $\tilde g_{K}\colon {\sf G}_{K}^{\natural}\to {\sf H}_{K}^{\natural}$, \ie  $\bLie (\tilde g_{K}) = m\psi^{\prime}$ in $\mathbf{BOg}(K)$ for a suitable $m\in \N$. We may assume $m=1$. By Chevalley theorem (see \cite[Lemma 2.3]{Co}) $\tilde g_{K}$ yields a morphism on the semi-abelian quotients $g_{K}\colon {\sf G}_{K}\to {\sf H}_{K}$. Now consider the morphism
$T_{\og}(g_{K})\colon T_{\og}({\sf G}_{K})\to T_{\og}({\sf H}_{K})$ and compare with the morphism induced by $\psi$ on weight $-1$ parts as displayed in the following commutative diagram
\[
\xymatrix{
 0\ar[r]&T_{\og}({\sf G}_{K})\ar[r]\ar[d]^{\psi_{-1}} &  T_{\og}({\sf M}_{K}) \ar[r]  \ar[d]^{\psi} & T_{\og}({\sf L}_{K}[1])\ar[r]\ar[d]^{\psi_{0}} &0\\
0\ar[r]&T_{\og}({\sf H}_{K})  \ar[r]& T_{\og}({\sf N}_{K}) \ar[r] & T_{\og}({\sf F}_{K}[1])\ar[r] &0}
 \]
Since by construction $T_{\bog} (g_{K}) = \psi^{\prime}_{-1}$ in $\mathbf{BOg}(K)$ we deduce that $T_{\og} (g_{K}) = \psi_{-1}$ as well. In fact, $T_{\dr} (g_{K})= T_{\bog} (g_{K})= T_{\og} (g_{K})$ coincide on the underlying $K$-vector spaces via $T_{\dr}$.

It follows from the Lemma \ref{pro.tog1} that there exists a morphism $f_{K}\colon {\sf L}_{K}\to {\sf F}_{K}$ such that  $ T_{\og} (f_{K}) = m\psi_{0}\colon T_{\og}({\sf L}_{K}[1])\to T_{\og}({\sf F}_{K}[1])$ for an $m\in \N$. As above, we may assume $m=1$.

Note that if the pair $(f_K,g_K)$ gives a morphism of $1$-motives $\varphi_{K}\colon {\sf M}_{K}\to {\sf N}_{K}$, i.e., if  $  g_{K}\circ {\sf u}_{K}={\sf v}_{K}\circ f_{K}$, then $T_{\bog}(\varphi_{K})=\psi^{\prime}$ by construction. As the functor $\Psi$ of \eqref{psifun} is faithful, the fact that $T_{\og}(\varphi_{K})-\psi$ induces the zero morphism between the Bost-Ogus realizations implies that $T_{\og}(\varphi_{K})=\psi$ in $\mathbf{FOg}(K)$.
  
It is then sufficient to check that, up to multiplication by a positive integer, we have $\tilde g_{K}\circ {\sf u}^{\natural}_{K}={\sf v}^{\natural}_{K}\circ f_{K}$. 
After replacing $K$ with a finite extension we may further assume that ${\sf L}$ and ${\sf F}$ are constant. Take  $v$ an unramified place of good reduction both for ${\sf M}_{K}$ and ${\sf N}_{K}$ such that $\tilde g_{K}$ extends to a morphism $\tilde g  \colon {\sf G}^{\natural}\to {\sf H}^{\natural}$ over $W(k_{v})=\cO_{K_{v}}$.  
In particular, ${\sf u}^{\natural}_{K}$ and ${\sf v}^{\natural}_{K}$ extend to morphisms ${\sf u}^{\natural} \colon {\sf L}\to {\sf G}^{\natural}, {\sf v}^{\natural}\colon {\sf F}\to {\sf H}^{\natural}$ over $\cO_{K_{v}}$ and hence are determined by the induced homomorphisms between the $\cO_{K_{v}}$-rational points. It suffices then to prove that the following diagram
\begin{equation}\label{lgfh}
\xymatrix{
 {\sf L}(\cO_{K_{v}})\ar@{->}[r]^{{\sf u}^\natural\hspace*{0.5cm}}\ar[d]^f & {\sf G}^{\natural}(\cO_{K_{v}}) \ar[d]^{\tilde g}\\
{\sf F}(\cO_{K_{v}})\ar@{->}[r]^{{\sf v}^\natural\hspace*{0.5cm}} & {\sf H}^{\natural}(\cO_{K_{v}})}
\end{equation}
commutes, up to multiplication by a positive integer. Consider the following diagram 
\[
\xymatrix{
 {\sf L}(\cO_{K_{v}}) \ar[r]^(0.4){\alpha_{\sf L}}\ar[d]^f &\Lie ({\sf L}\otimes \G_{a})\! \otimes\!  K_{v}\ar[r]^{\delta_{v}^{\sf M}}\ar[d]^{\psi_{0}\otimes K_{v}} &  \Lie ({\sf G}^{\natural})\! \otimes\!  K_{v}  \ar[r]^(0.4){\log_{  {\sf G}^{\natural},\Q}^{-1}}_{\simeq}  \ar[d]^{\psi\otimes K_{v}} & {\sf G}^{\natural}(\cO_{K_{v}})\! \otimes_{\Z }\! \Q   \ar[d]^{\tilde g\otimes \Q}\\
{\sf F}(\cO_{K_{v}}) \ar[r]^(0.45){\alpha_{\sf F}}&\Lie ({\sf F}\otimes \G_{a})\! \otimes\!  K_{v}\ar[r]^{\delta_{v}^{{\sf N}}} &  \Lie ({\sf H}^{\natural})\! \otimes \! K_{v}  \ar[r]^(0.45){\log_{ {\sf H}^\natural,\Q}^{-1}}_{\simeq} & {\sf H}^{\natural}(\cO_{K_{v}})\! \otimes_{\Z }\! \Q }
 \] 
where the map $\alpha_{\sf L}$ is the composition of the canonical map $ {\sf L}(\cO_{K_{v}}) \to (L\otimes \G_{a})(\cO_{K_{v}})\otimes_\Z \Q, x\mapsto (x\otimes 1)\otimes 1$ with $ \log_{L\otimes \G_{a},\Q}$ and similarly for $\alpha_{\sf F}$. Note that we here identify
$T_{\dr} ({\sf L}_{K}[1])_{v}$ with $\Lie ({\sf L}\otimes \G_{a})\otimes  K_{v}$ and 
$T_{\dr} ({\sf F}_{K}[1])_{v}$  with $\Lie ({\sf F}\otimes \G_{a})\otimes  K_{v}$; we also identify   $T_{\dr}({\sf M}_{K})_{v}$ with $\Lie ({\sf G}^{\natural})\otimes K_{v}$ and $T_{\dr}({\sf N}_{K})_{v}$ with $\Lie ({\sf H}^{\natural})\otimes K_{v}$.
Hence the most left square commutes since by definition $T_\og (f_K)= \psi_0$. The commutativity of the square in the middle follows from Lemma \ref{section} as  $\psi$ and $\psi_0$ are morphisms in $\mathbf{Og}(K)$ so that $\psi\otimes K_v$ and $\psi_0\otimes K_v$ commutes with $F_v$.
The last square on the right commutes by functoriality of the logarithm as $\psi\otimes K_{v} = \psi^{\prime}\otimes K_{v} = \Lie  (\tilde g_{K} \otimes K_{v})$ on the underlying $K_v$-vector spaces.  Finally, note that the composition of the upper (respectively, lower) horizontal arrows is ${\sf u}^\natural\otimes \mathrm{id}_{\Q}$ (respectively, ${\sf v}^\natural \otimes \mathrm{id}_{\Q}$) by definition of $\delta_{v}$ in \eqref{eq.del}. Hence \eqref{lgfh} commutes up to multiplication by a positive integer.  
\end{proof}

\begin{example}\label{ex.fund}
Let ${\sf M}_{K}=[{\sf u}_{K}\colon \Z\to \G_{m,K}]$ and ${\sf N}_{K}=[{\sf v}_{K}\colon \Z\to \G_{m,K}]$ be two $1$-motives over $K$. Set $a\df {\sf u}_{K}(1)\in K^\ast$ and $b\df {\sf v}_{K}(1)\in K^\ast$. Note that any morphism 
$(f_{K},g_{K})\colon {\sf M}_{K}\to {\sf N}_{K}$ is of the type $f_{K}=m$, $g_{K}=r$ with $a^r=b^m$.
In particular, for general $a,b\in K^\ast$ the  unique morphism between $ {\sf M}_{K}$ and ${\sf N}_{K}$ is the zero morphism.  
By Definition \ref{Tdr}, it follows from \eqref{dia.ue} that \[T_{\dr}({\sf M}_{K})=T_{\dr}({\sf N}_{K})= \Lie(\G_{m,K})\oplus \Lie(\G_{a,K})=K\oplus K.\]
 The de Rham realisation of the two $1$-motives yields objects in $\mathbf{Og}(K)$ with the same underlying structure of filtered $K$-vector spaces: the filtration being induced by the weight filtration $W_{-1}{\sf M}_{K}= [0 \to \G_{m,K}]$ and $\gr_0^W {\sf M}_{K} =[\Z \to 0]$.
 As described in Definition \ref{TOg} (\cf the proof of Lemma \ref{Tog}),  for any unramified place $v$ of $K$, $T_{\dr}({\sf M}_{K_{v}})=K_{v}\oplus K_{v}$ is endowed with the $\sigma_{v}$-semilinear operator $F_{v}$  such that for $(x, y)\in K_{v}\oplus K_{v}$ we have that   $F_{v}(x, 0)=(p_{v}^{-1}\sigma_{v}(x),0)$ and $\bar{F_{v}((x,y))}= \sigma_{v}(y)$ where the $\bar{(\ \  )}$ stands for the class in the weight $0$ quotient $K_{v}$; the same holds for ${\sf N}_{K}$. 

Note that, in general, we have several $K$-linear homomorphisms $T_{\dr}({\sf M}_{K})\to T_{\dr}({\sf N}_{K})$ that preserve the filtration and commute with the $F_{v}$'s on the graded pieces of the filtration but do not arise from morphisms of $1$-motives (even up to isogeny).  For example, take  $a=1$ and $b =2$ and the identity map on $K \oplus K$. This example shows that knowing fullness for the pure weight parts is not enough to deduce the Theorem \ref{thm.tog}. 

For general $a,b\in K^\ast$, knowing $F_{v}$ on $T_{\dr}({\sf M}_{K})\otimes K_{v} =K_{v}\oplus K_{v}$ is equivalent to give a Frobenius equivariant splitting $\delta_{v}^{\sf M}\colon K_{v}\to K_{v}\oplus K_{v}$ of the weight $0$ quotient $K_{v}$ thanks to Lemma \ref{section}. Now, assume $a,b\in \cO_{K_{v}}$. Then, by Examples \ref{ex.ga}, we can write $\delta_{v}^{\sf M}(1)=(
\log(a^{p_{v}(p_{v}^{n_{v}}-1}))/p_{v}(p_{v}^{n_{v}}-1)\oplus 1$. 
As a consequence, the identity map on $K\oplus K$ commutes with the sections $\delta_{v}^{\sf M},\delta_{v}^{\sf N} $  if and only if  $a^{p_v(p^{n_{v}}_{v}-1)}=b^{p_v(p_{v}^{n_{v}}-1)}$, thus  if, and only if, $a^{ p_{v}^{n_{v}}-1}=b^{p_{v}^{n_{v}}-1} $ (as $K_v$ does not contain non-trivial $p_v$-roots of unity being absolutely unramified).
 We conclude that there exists a  morphism $T_\og({\sf M}_{K})\to T_\og({\sf N}_{K})$ in $\mathbf{FOg}(K)$ which is identity on the underlying vector spaces if and only if  ${\sf M}_{K}={\sf N}_{K}$.
\end{example}

\begin{remark} \label{psibog}
If we work with $\mathbf{BOg}(K)$ and even with the filtered analogue, it is not true that the functor $T_{\bog}$ is full on 1-motives, in general. For example, take ${\sf M}= \Z[1]$ and note that $\End_{\MQ}({\sf M}) = \Q$ while $\End_{\mathbf{BOg}(K)}(T_{\bog} ({\sf M}))= K$.
\end{remark}


\newpage

\vfill

\end{document}